\newtheorem{prop}{Proposition}
\newtheorem{lem}{Lemma}
\theoremstyle{definition}
\newtheorem{definition}{Definition}
\newtheorem{expl}{Example}
\newtheorem{rem}{Remark}
\DeclareMathOperator{\R}{\mathbb{R}}
\DeclareMathOperator{\Z}{\mathbb{Z}}
\numberwithin{equation}{section}
\renewcommand{\i}{\mathbf{i}}
\newcommand{\re}{{\text{re}}}
\newcommand{\im}{{\text{im}}}
\DeclareMathOperator{\conc}{\mathbin{\ast}}
\title{Equilibria in Kuramoto oscillator networks:\\ An algebraic approach}
\author{Tung T. Nguyen$^{1,2,3}$, Roberto C. Budzinski$^{1,2,3}$, Jacqueline Đo\`{a}n$^{1,2,3}$, Federico W.~Pasini$^{1,2,3}$, J\'an Min\'a{\v c}$^{1,3}$, Lyle E.~Muller$^{1,2,3}$}
\address{1 - Department of Mathematics, Western University, London, ON, Canada \\
2 - Brain and Mind Institute, Western University, London, ON, Canada \\
3 - Western Academy for Advanced Research, Western University, London, ON, Canada}
\begin{document}

\maketitle

\begin{abstract}
Kuramoto networks constitute a paradigmatic model for the investigation of collective behavior in networked systems. Despite many advances in recent years, many open questions remain on the solutions for systems composed of coupled Kuramoto oscillators on complex networks. In this article, we describe an algebraic method to find equilibria in this kind of system without approximation. To do this, we use a recently introduced algebraic approach to the Kuramoto dynamics, which results in an explicitly solvable complex-valued equation that captures the dynamics of the original Kuramoto model. Using this new approach, we obtain equilibria for both the nonlinear original Kuramoto and complex-valued systems considering the case of homogeneous natural frequency. We then completely classify all equilibria in the case of complete graphs originally studied by Kuramoto. Finally, we study equilibria in networks of coupled oscillators with phase-lag, in generalized circulant networks, multilayer networks, and also random networks.
\end{abstract}

\maketitle

\section{Introduction} 

A paradigmatic system for understanding the collective behavior of coupled nodes is given by the Kuramoto model, which was introduced by Yoshiki Kuramoto in 1975. This model can be described as a set of coupled phase-oscillators that interact through a nonlinear function, usually considered to be a sine function \cite{Kuramoto2012chemical, acebron2005kuramoto}. Studies of this mathematical model have revealed several, previously unknown and non-trivial phenomena that arise from the dynamics and pattern of connections in this system, such as phase synchronization, remote synchronization, cluster synchronization, chimera states, and Bellerophon states \cite{abrams2004chimera, arenas2008synchronization, boccaletti_2002, parastesh2020chimeras, rodrigues2016kuramoto, strogatz2000kuramoto, xu2018origin}. In this sense, this model has been used as a theoretical approach for studying collective behavior and emergent phenomena in different fields, spanning from social interactions to biology and physics \cite{arenas2008synchronization,boccaletti_2002, Strogatz2001, bick2020understanding}.

Many recent mathematical studies have focused on possible solutions of the Kuramoto model -- specifically, the equilibrium points of the system \cite{chen2018counting,xin2016analytical,chen2019three,mehta2015algebraic,coss2018locating}. One of the possible ways to mathematically analyze coupled Kuramoto oscillators is by considering a mean-field approach in the thermodynamic limit, i.e.~$N \rightarrow \infty$, where $N$ is the number of oscillators in the system \cite{pikovsky2015dynamics, pikovsky2008partially}. In this case, it is possible to define the critical coupling strength where the system transitions to synchronization \cite{basnarkov2007phase, pazo2005thermodynamic}. Using this approach, Medvedev has shown the existence of equilibrium points (twisted states) for small-world networks, where non-local connections play an important role \cite{medvedev2014small}. Also, the mean-field approximation is an important strategy in the analytical treatment of coupled oscillators, where the dynamics of the entire system is reduced to a few variables \cite{bick2020understanding, hu2014exact}. An interesting approach is given by the discrete state mean-field
Kuramoto Model \cite{li2022mean}. Furthermore, many other approaches have been used in order to explore the role of topology in the synchronization of Kuramoto oscillators. Recent advances have shown different synchronization phenomena in complex networks
\cite{parastesh2020chimeras, pikovsky2015dynamics, rodrigues2016kuramoto}.

An important question in this context is the possibility of finding equilibrium points given a specific network of Kuramoto oscillators \cite{lu2020synchronization,taylor2012there,townsend2020dense,yoneda2021lower}. Many authors have explored the role of the connection architecture in the existence and stability of equilibrium points, which affects the process of transition to a global phase-synchronized state. Interestingly, Townsend \textit{et al.} have recently shown, somewhat counter-intuitively, some examples where the stability of the synchronized state depends most strongly not on the density of connections in the network, but on the specific pattern of connections, highlighting the importance of a particular network's structure on Kuramoto dynamics \cite{townsend2020dense}. Even though these results represent a great advance in the understanding of the collective behavior, they reach an inherent difficulty:~the Kuramoto model is originally described by a nonlinear equation, which imposes a limitation to the analytical treatment, where methods of linearization and approximations are usually necessary.   
In recent work \cite{muller2021algebraic}, a novel analytical approach to Kuramoto oscillators has been introduced -- the complex-valued model -- which results in an explicitly solvable complex-valued equation that captures the dynamics of the original, nonlinear Kuramoto model. Specifically, using the explicit expression for the complex-valued model and an iterative, operator-based approach, we showed in \cite{budzinski2022geometry} that (1) the trajectories of the two systems (the original, nonlinear Kuramoto model and the complex-valued system) can match precisely for long times and (2) this approach can thus provide a unified, geometrical insight into the transient behavior of the networks. While the original Kuramoto model is defined in terms of real numbers, the complex-valued one is defined in terms of complex numbers, where the argument corresponds to solution of the original Kuramoto model.

In this work, we now use this analytical approach to study the equilibria in coupled Kuramoto oscillator networks. Because the complex-valued model has an explicit solution, we can approach this problem using very straightforward techniques, where no approximations are needed. We show the equilibria of the complex-valued model correspond to those of the original, real-valued, and nonlinear KM, using algebraic techniques on the adjacency matrix of an individual network model. The main ingredient of our analysis is the use of Euler's formula and some important properties of the matrix exponential. The results from this work provide insight into both the structure of equilibria in the original KM, in addition to providing new insight into the complex-valued analytical form we have introduced in previous work. 

This work is organized as follows. We first show that some eigenvectors of the network adjacency matrix are equilibrium points for the novel complex-valued model (Sec. \ref{sec:eq_points_kuramoto}). We next show that the equilibrium points for the complex-valued model are also equilibrium points for the original (nonlinear) Kuramoto model. Utilizing this result, we extend our analysis to the case where the network can be understood as a circulant graph (Sec. \ref{sec:eq_points_circulant}), focusing also on the typical case of all-to-all coupling (complete graph) (Sec. \ref{subsec:eq_points_complete_graph}). With this result obtained, we then extend our analysis of equilibria to four new cases:~(i) equilibria in networks of Kuramoto oscillators with phase-lag, which leads to an interplay between attractiveness and repulsiveness in the coupling term (Sec. \ref{sec:eq_points_phase_lag}), (ii) equilibria in generalized circulant networks (Sec. \ref{sec:eq_points_generalized_circulant}), (iii) equilibria in multilayer networks (Sec. \ref{sec:eq_points_non_circulant}), and (iv) a new method to design equilibria in Erd\H{o}s-R\'enyi random graphs by applying specific changes to the adjacency matrix (Sec. \ref{sec:eq_points_random_networks}). Taken together, these results extend the analysis of equilibria in networks of Kuramoto oscillators to cases that could not be considered previously, demonstrating the utility of our complex-valued analytical approach. 

\section{Equilibrium points of Kuramoto models}\label{sec:eq_points_kuramoto}

The original Kuramoto model can be defined as the dynamical system governed by the equation
\begin{equation}\label{eq:KM}
\frac{d\theta_i}{dt} = \omega_{i} + \epsilon \sum_{j=1}^{N} a_{ij} \sin( \theta_j - \theta_i ),
\end{equation}
where $\theta_{i}$ is the phase of the $i^{\mathrm{th}}$ oscillator, $\omega_{i}$ is its natural frequency, $\epsilon$ is the coupling strength, $N$ is the number of oscillators in the network and $\bm{A}=(a_{ij})$ defines the adjacency matrix. We initially consider $a_{ij}=  0$ if $i$ and $j$ are unconnected and $1$ if connected, and later consider weighted adjacency matrices with real-valued entries. In this paper, we deal with the \textit{homogeneous} Kuramoto model, where $\omega_{i}= \omega$ for all $i \in [1,N]$. In this case, considering a suitable rotating frame of reference, without loss of generality we can assume $\omega = 0$.

Following \cite{muller2021algebraic, budzinski2022geometry}, starting with the original (homogeneous) Kuramoto model, we can extend this to a complex-valued model, obtaining a new dynamical system, governed by the following equation (throughout the paper, we denote $\i=\sqrt{-1}$):
\begin{equation}\label{eq:analytical_KM}
\frac{d\theta_i}{dt} = \epsilon \sum_{j=1}^{N} a_{ij} \big[ \sin( \theta_j - \theta_i ) - \i \cos( \theta_j - \theta_i ) \big]\,,
\end{equation}
Note this expression now implies $\theta_i \in \mathbb{C}$. As shown in \cite{muller2021algebraic}, the above equation is equivalent to 
\begin{equation}
\frac{d}{dt} \left( e^{\i \bm{\theta}} \right) = \epsilon \bm{A} e^{\i\bm{\theta}},
\end{equation}
where $\bm{A}$ is the adjacency matrix. By letting $\bm{x} = e^{\i\bm{\theta}}$, we have
\begin{equation}
{\frac{d\bm{x}}{dt}} = \epsilon \bm{A} \bm{x}.
\label{eq:analytical_solution}
\end{equation}
Let $\bm{\theta}=\bm{\theta}_{\re}+\i \bm{\theta}_{\im}$ be the decomposition of $\bm{\theta}$ into the real and imaginary parts. Then, we have 
\begin{equation} \label{eq:argument}
\bm{x}=e^{\i \bm{\theta}_{\re}- \bm{\theta}_{\im}}=e^{-\bm{\theta_{\im}}} e^{\i \bm{\theta}_{\re}}.
\end{equation}
By Eq. (\ref{eq:argument}), we can observe that $\bm{\theta_{\re}}$ is the argument of the solution $\bm{x}$. This naturally leads to the following definition.
\begin{definition} \label{def:equi_point}
We say that $\bm{\theta}_0 \in [-\pi, \pi]^N$ is an equilibrium point of the complex-valued model -- Eq. (\ref{eq:analytical_KM}) -- if for all time $t \geq 0$, $\bm{\theta}(t)=\bm{\theta}_0$. Equivalently 
\[ \arg(\bm{x(t)})=\arg(e^{\epsilon t \bm{A}} \bm{x}_0)=\arg(\bm{x}_0)=\bm{\theta}_0,  \]
where $\bm{x}_0=e^{\i \bm{\theta}_0}$.
\end{definition} 

\begin{rem} 
It is important to emphasize that the original Kuramoto model and our complex-valued approach are two distinct dynamical systems. We find that, by iterating the explicit expression for the complex-valued system over short intervals, the trajectories of the two systems can precisely match for long times \cite{budzinski2022geometry}. This approach thus offers analytical, mechanistic insights into the transient dynamics of Kuramoto networks, where the dynamics can be captured in terms of the eigenmodes of the system \cite{budzinski2022geometry}. In this work, we analyze the equilibrium points of both systems. At the equilibrium points considered here, the two models are equivalent. Since the systems have this equivalence in this context, we use the same variable $\theta$ to refer to both systems.
\end{rem}

We also recall the definition of equilibrium points in the theory of differential equations (see \cite{Perko2001} for further discussions).
\begin{definition} \label{def:classical_equi}
Consider the following differential equation
\begin{equation} \label{eq:classical_equi}
\dot{\bm{\theta}}(t)=f(\bm{\theta}(t)) ,
\end{equation}
where $\bm{\theta}(t)=(\theta_1(t), \theta_2(t), \ldots, \theta_N(t)).$ We say that $\bm{\theta}_0$ is an equilibrium point for Eq. (\ref{eq:classical_equi}) if $f(\bm{\theta}_0)=0.$ In other words, $\bm{\theta}(t)=\bm{\theta}_0$ is a solution of the following differential equation with initial condition 
\[
 \begin{cases} \dot{\bm{\theta}}(t)=f(\bm{\theta}(t)) \\
\bm{\theta}(0)=\bm{\theta}_0. \end{cases}
\] 
\end{definition}
In the context of the original Kuramoto model Eq. (\ref{eq:KM}) (note that we have set $\omega=0$), $\bm{\theta}_0=(\theta_1, \theta_2, \ldots, \theta_N)$ is an equilibrium point if and only if for all $1 \leq i \leq N$ 
\[ \sum_{j=1}^{N} a_{ij} \sin( \theta_j - \theta_i ) =0 .\]

\begin{rem}
In this article, we concentrate on the classical notion of equilibrium points as discussed in Definition \ref{def:classical_equi} and Definition \ref{def:equi_point}. In our subsequent work, we will discuss a more general notion of equilibria in Kuramoto networks, namely those solutions in which $\theta_j(t)-\theta_i(t)$ is a constant for all time $t$. 

\end{rem} 

In the following discussion, we develop an effective method to find equilibrium points of the original KM -- Eq. (\ref{eq:KM}) -- and the complex-valued model -- Eq. (\ref{eq:analytical_KM}).

We start with the following lemma.
\begin{lem} \label{lem:exponential}
Let $\bm{B}$ be a matrix. Suppose $\lambda$ is an eigenvalue of a matrix $\bm{B}$ and $\bm{v}$ is an associated eigenvector. Then 
\[ e^{\bm{B}}v=e^{\lambda} \bm{v} .\] 
\end{lem}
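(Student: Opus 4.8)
The plan is to prove the identity $e^{\bm{B}}\bm{v} = e^{\lambda}\bm{v}$ by going back to the definition of the matrix exponential as a power series and using the eigenvector relation iteratively. First I would recall that $e^{\bm{B}}$ is defined by the absolutely convergent series $e^{\bm{B}} = \sum_{k=0}^{\infty} \frac{1}{k!}\bm{B}^k$, so that $e^{\bm{B}}\bm{v} = \sum_{k=0}^{\infty} \frac{1}{k!}\bm{B}^k\bm{v}$, where interchanging the (finite-dimensional) matrix-vector product with the infinite sum is justified by the convergence of the series (equivalently, by continuity of the linear map $\bm{w}\mapsto \bm{w}$ applied termwise, since all norms on a finite-dimensional space are equivalent and the partial sums converge in operator norm).

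The key step is the observation that $\bm{B}\bm{v} = \lambda\bm{v}$ implies, by an immediate induction on $k$, that $\bm{B}^k\bm{v} = \lambda^k\bm{v}$ for every $k \geq 0$ (the base case $k=0$ being $\bm{v}=\bm{v}$, and the inductive step $\bm{B}^{k+1}\bm{v} = \bm{B}(\bm{B}^k\bm{v}) = \bm{B}(\lambda^k\bm{v}) = \lambda^k(\bm{B}\bm{v}) = \lambda^{k+1}\bm{v}$). Substituting this into the series gives
\[
e^{\bm{B}}\bm{v} = \sum_{k=0}^{\infty} \frac{1}{k!}\bm{B}^k\bm{v} = \sum_{k=0}^{\infty} \frac{\lambda^k}{k!}\bm{v} = \left(\sum_{k=0}^{\infty} \frac{\lambda^k}{k!}\right)\bm{v} = e^{\lambda}\bm{v},
\]
where in the last equality we use the scalar exponential series for $\lambda \in \C$, which again converges absolutely so the scalar can be pulled out of the sum.

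There is essentially no serious obstacle here; the only point requiring a modicum of care is the justification for exchanging the matrix product with the infinite summation, which I would dispatch in one sentence by appealing to the fact that the partial sums $\sum_{k=0}^{n}\frac{1}{k!}\bm{B}^k$ converge to $e^{\bm{B}}$ in any matrix norm and that matrix-vector multiplication is continuous. Since the statement and its use downstream are elementary, I would keep the write-up to three or four lines, emphasizing the induction $\bm{B}^k\bm{v}=\lambda^k\bm{v}$ as the heart of the argument.
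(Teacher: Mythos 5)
Your proposal is correct and follows essentially the same route as the paper's proof: expand $e^{\bm{B}}$ as its power series, use $\bm{B}^k\bm{v}=\lambda^k\bm{v}$ termwise, and resum to the scalar exponential. The only difference is that you spell out the induction and the interchange of sum and matrix-vector product, which the paper leaves implicit.
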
 
\begin{proof}
By definition we have 
\[ e^{\bm{B}}=\sum_{n=0}^{\infty} \frac{\bm{B}^{n}}{n!} .\]
Applying both sides to $\bm{v}$ we have 
\[ e^{\bm{B}}\bm{v}=\sum_{n=0}^{\infty} \frac{\bm{B}^{n}}{n!}\bm{v}=\sum_{n=0}^{\infty} \frac{\lambda^n}{n!}\bm{v}=e^{\lambda}\bm{v} .\] 
\end{proof}

A direct consequence of the above lemma is the following.
\begin{prop} \label{prop:twisted_state}
Suppose $\bm{x}_0=e^{\i \bm{\theta}_0}$ is an eigenvector of $\bm{A}$ associated with a real eigenvalue $\lambda$. Then $\bm{\theta}_0$ is an equilibrium point of the complex KM in the sense of Definition \ref{def:equi_point}.
\end{prop}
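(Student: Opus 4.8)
The plan is to reduce everything to Lemma \ref{lem:exponential} together with the elementary observation that scaling a complex vector entrywise by a strictly positive real number leaves each component's argument unchanged. By Definition \ref{def:equi_point}, it suffices to verify that $\arg(e^{\epsilon t \bm{A}} \bm{x}_0) = \arg(\bm{x}_0)$ for every $t \geq 0$, where $\bm{x}_0 = e^{\i\bm{\theta}_0}$ and $\bm{\theta}_0 \in [-\pi,\pi]^N$.

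First I would apply Lemma \ref{lem:exponential} to the matrix $\bm{B} = \epsilon t \bm{A}$. Since $\bm{x}_0$ is an eigenvector of $\bm{A}$ with eigenvalue $\lambda$, it is automatically an eigenvector of $\epsilon t \bm{A}$ with eigenvalue $\epsilon t \lambda$; the lemma then gives $e^{\epsilon t \bm{A}} \bm{x}_0 = e^{\epsilon t \lambda} \bm{x}_0$. Next, because $\lambda \in \R$ and $\epsilon, t \in \R$ with $t \geq 0$, the scalar $c := e^{\epsilon t \lambda}$ is a strictly positive real number. Writing $\bm{x}_0 = e^{\i\bm{\theta}_0}$ componentwise, each entry $e^{\i\theta_{0,j}}$ has modulus $1$ (here we use that $\bm{\theta}_0$ is a real vector), hence is nonzero and its argument is well-defined, and $\arg(c\,e^{\i\theta_{0,j}}) = \arg(e^{\i\theta_{0,j}}) = \theta_{0,j}$. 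Therefore $\arg(e^{\epsilon t \bm{A}} \bm{x}_0) = \bm{\theta}_0 = \arg(\bm{x}_0)$ for all $t \geq 0$, which is precisely the equilibrium condition in Definition \ref{def:equi_point}.

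There is essentially no serious obstacle here; the only points needing a little care are (i) observing that an eigenvector of $\bm{A}$ is an eigenvector of $\epsilon t\bm{A}$ with the correspondingly scaled eigenvalue, so that Lemma \ref{lem:exponential} is directly applicable, and (ii) that $\arg$ is taken componentwise and is insensitive to multiplication by a positive real scalar — this is the step where the hypothesis that $\lambda$ is \emph{real} (and $t\geq 0$) enters essentially, since a non-real $\lambda$ would rotate the entries of $\bm{x}_0$ and generically change their arguments. One may also note in passing that the argument used nothing about $\bm{\theta}_0$ beyond its being a real vector with $e^{\i\bm{\theta}_0}$ an eigenvector of $\bm{A}$, so no additional normalization of $\bm{\theta}_0$ is required.
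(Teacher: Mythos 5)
Your proof is correct and follows essentially the same route as the paper: apply Lemma \ref{lem:exponential} with $\bm{B}=\epsilon t \bm{A}$ to get $\bm{x}(t)=e^{\epsilon t\lambda}\bm{x}_0$, then use that the scalar $e^{\epsilon t\lambda}$ is positive real (since $\lambda\in\R$) so the componentwise argument is unchanged. Your write-up merely spells out more explicitly the points the paper leaves implicit (the scaled eigenvalue and the invariance of $\arg$ under positive real scaling).
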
 
\begin{proof}
By Lemma \ref{lem:exponential} for $\bm{B}=\epsilon t \bm{A}$, we have 
\[ \bm{x}(t)=e^{\epsilon t \bm{A}} \bm{x}_0=e^{\lambda \epsilon t} \bm{x}_0 .\] 
Because $\lambda \in \R$ we have 
\[ \arg(\bm{x}(t))=\arg(e^{\lambda \epsilon t} \bm{x}_0)=\arg(\bm{x}_0)=\bm{\theta}_{0} .\] 
\end{proof}

Next, we show that the above equilibrium points are also equilibrium points for the original KM Eq. (\ref{eq:KM}).
\begin{prop} \label{prop:twisted_state_KM}
Suppose $\bm{x}_0=e^{\i \bm{\theta}_0}$ is an eigenvector of $\bm{A}=(a_{ij})$ associated with a real eigenvalue $\lambda$. Then $\bm{\theta}_0=(\theta_1, \theta_2, \ldots, \theta_N)$ is an equilibrium point of the original KM.
\[ \frac{d\theta_i}{dt} = \epsilon \sum_{j=1}^{N} a_{ij} \sin( \theta_j - \theta_i ). \] 
\end{prop}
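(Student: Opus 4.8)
The plan is to unwind the eigenvector equation coordinate by coordinate and read off the Kuramoto equilibrium condition as the vanishing of an imaginary part. First I would write the hypothesis $\bm{A}\bm{x}_0 = \lambda \bm{x}_0$ in components: since $(\bm{x}_0)_j = e^{\i\theta_j}$, the $i$-th row of this identity reads $\sum_{j=1}^N a_{ij} e^{\i\theta_j} = \lambda e^{\i\theta_i}$. Dividing through by the nonzero scalar $e^{\i\theta_i}$ and using $e^{\i\theta_j}/e^{\i\theta_i} = e^{\i(\theta_j-\theta_i)}$ gives $\sum_{j=1}^N a_{ij} e^{\i(\theta_j-\theta_i)} = \lambda$ for every $i$.

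Next I would apply Euler's formula to the left-hand side. Because $\bm{\theta}_0 \in [-\pi,\pi]^N$ has real entries, each difference $\theta_j - \theta_i$ is real, so $e^{\i(\theta_j-\theta_i)} = \cos(\theta_j-\theta_i) + \i\sin(\theta_j-\theta_i)$ and the left-hand side splits cleanly into a real and an imaginary part, with all $a_{ij} \in \R$. Taking imaginary parts of both sides and using that $\lambda \in \R$ yields $\sum_{j=1}^N a_{ij}\sin(\theta_j-\theta_i) = 0$ for each $i$, which is precisely the condition (recorded just after Definition~\ref{def:classical_equi}) for $\bm{\theta}_0$ to be an equilibrium point of Eq.~(\ref{eq:KM}) with $\omega = 0$. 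This completes the argument.

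There is essentially no hard step: the statement is a direct algebraic consequence of the eigenvector relation together with the reality of $\lambda$. The only point that deserves a moment's care is the passage from $e^{\i\bm{\theta}}$ back to trigonometric functions — one should record that the hypothesis (an equilibrium of the real KM) forces the entries of $\bm{\theta}_0$ to be real, so that ``imaginary part'' is meaningful term by term. It is also worth noting, perhaps as a remark, that taking the \emph{real} part of the same identity simultaneously gives the companion relation $\sum_{j=1}^N a_{ij}\cos(\theta_j-\theta_i) = \lambda$, identifying the eigenvalue with the common value of these row sums at the equilibrium.
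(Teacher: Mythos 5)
Your proposal is correct and follows essentially the same route as the paper: both use the componentwise eigenvector relation $\sum_j a_{ij} e^{\i\theta_j} = \lambda e^{\i\theta_i}$ together with the reality of $a_{ij}$ and $\lambda$, and extracting the imaginary part after dividing by $e^{\i\theta_i}$ is exactly the paper's step of subtracting the conjugated relation via Euler's formula. The only nitpick is your phrasing that ``the hypothesis (an equilibrium of the real KM) forces the entries of $\bm{\theta}_0$ to be real'' --- the reality of $\bm{\theta}_0$ comes from the standing assumption $\bm{\theta}_0 \in [-\pi,\pi]^N$, not from the equilibrium property, which is what you are proving.
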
 

\begin{proof}
By Definition \ref{def:classical_equi}, we need to show that for all $1 \leq i \leq N$ 
\[ \sum_{j=1}^N a_{ij} \sin(\theta_j-\theta_i)=0 .\] 

First of all, since $\bm{x}_0$ is an eigenvector, we know that for all $1 \leq i \leq N$ 
\[ \sum_{j=1}^N a_{ij} e^{\i \theta_j}=\lambda e^{\i \theta_i} .\]

Taking the conjugation of both sides and noting that $a_{ij} \in \R$ and $\lambda \in \R$, we have 
 \[ \sum_{j=1}^N a_{ij} e^{-\i \theta_j}=\lambda e^{-\i \theta_i} .\]
We recall Euler's formula 
\begin{equation}\label{eq:euler} \sin(x)=\frac{e^{\i x}-e^{-\i x}}{2 \i},
\end{equation}
thanks to which
\[ \sin(\theta_j-\theta_i)=\frac{e^{\i (\theta_j-\theta_i)}-e^{-\i (\theta_j -\theta_i)}}{2 \i } .\] 
Hence 
\begin{align*}
(2 \i) \sum_{j=1}^Na_{ij} \sin(\theta_j-\theta_i) &= \sum_{j=1}^N a_{ij} \left[ e^{\i (\theta_j-\theta_i)}- e^{-\i(\theta_j-\theta_i)} \right] \\
&=e^{-\i \theta_i} \sum_{j=1}^N a_{ij} e^{\i \theta_j} - e^{\i \theta_i} \sum_{j=1}^N a_{ij} e^{-\i \theta_j}\\
&= e^{-\i \theta_i} \lambda e^{\i \theta_i}-e^{\i \theta_i} \lambda e^{-\i \theta_i}\\
&=\lambda-\lambda=0.
\end{align*} 
This completes the proof.
\end{proof}

\begin{rem}\label{rem:theta_1}
We state many results assuming that equilibrium points have a zero first coordinate, that is, the phase of the first oscillator is $0$. Since at equilibrium points all oscillators have a constant phase, this assumption is not a real loss of generality, but rather it is a normalization obtained as part of the choice of the rotating frame of reference. However, we remind the reader that adding a fixed phase to all coordinates of an equilibrium point gives another equilibrium point.
\end{rem}

\section{Equilibrium points for circulant networks} \label{sec:eq_points_circulant}

We apply Proposition \ref{prop:twisted_state} and Proposition \ref{prop:twisted_state_KM} to find equilibrium points of the the KM when the topological structure of the network is circulant. More precisely, let $\bm{C}$ be a symmetric circulant matrix with first column $[c_0, c_1, \ldots,c_{n-1}]$. Then the Circulant Diagonalization Theorem ensures that, defining
\begin{equation}
    \bm{\theta}^{(j)}=\left(0,  \frac{2 \pi j}{n}, \ldots, \frac{2 \pi j (n-1)}{n} \right)^{T}\qquad\mbox{for }j=0,\dots,n-1,
    \label{eq:twisted_state_eq}
\end{equation}
the vector $\bm{x}_0^{(j)}=e^{\i \bm{\theta}^{(j)}}$ is an eigenvector of $\bm{C}$ associated with the (real) eigenvalue 
\begin{equation*}
    \lambda _{j}=c_{0}+c_{n-1}\omega_n ^{j}+c_{n-2}\omega_n^{2j}+\dots +c_{1}\omega_n ^{(n-1)j}.
\end{equation*}
See also \cite{townsend2020dense}.

By Proposition \ref{prop:twisted_state} and Proposition \ref{prop:twisted_state_KM}, we have
\begin{prop}
For $0 \leq j \leq n-1$, let 
\[ \bm{\theta}^{(j)}=\left(0,  \frac{2 \pi j}{n}, \ldots, \frac{2 \pi j (n-1)}{n} \right)^{T} . \] 
Then 
\begin{enumerate}
\item $\bm{\theta}^{(j)}$ is an equilibrium point of the complex-valued -- Eq. (\ref{eq:analytical_solution}). 
\item $\bm{\theta}^{(j)}$ is an equilibrium point of the original KM -- Eq. (\ref{eq:KM}).
\end{enumerate}
\end{prop}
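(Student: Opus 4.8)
The plan is to obtain both statements as immediate corollaries of Proposition~\ref{prop:twisted_state} and Proposition~\ref{prop:twisted_state_KM}, so essentially all of the work is in verifying that the vector $\bm{x}_0^{(j)}=e^{\i\bm{\theta}^{(j)}}$ is an eigenvector of $\bm{C}$ with a \emph{real} eigenvalue. Writing $\omega_n=e^{2\pi\i/n}$, the $r$-th coordinate of $\bm{\theta}^{(j)}$ is $2\pi jr/n$, so $\bm{x}_0^{(j)}=(1,\omega_n^{j},\omega_n^{2j},\dots,\omega_n^{(n-1)j})^{T}$. Using that $\bm{C}$ is circulant with $(r,s)$-entry $c_{(r-s)\bmod n}$, a one-line reindexing of the sum $\big(\bm{C}\bm{x}_0^{(j)}\big)_r=\sum_s c_{(r-s)\bmod n}\,\omega_n^{js}$ shows $\bm{C}\bm{x}_0^{(j)}=\lambda_j\bm{x}_0^{(j)}$ with $\lambda_j=c_0+c_{n-1}\omega_n^{j}+c_{n-2}\omega_n^{2j}+\dots+c_1\omega_n^{(n-1)j}$, which is exactly the Circulant Diagonalization Theorem recalled above.

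The only place where the hypotheses of this section actually enter — and the one point worth checking carefully — is the reality of $\lambda_j$. Here we use that $\bm{C}$ is \emph{symmetric}, i.e.\ $c_k=c_{n-k}$ for all $k$. Pairing the $c_k$-term with the $c_{n-k}$-term in the formula for $\lambda_j$ collapses it to $c_0+\sum_k c_k\big(\omega_n^{jk}+\omega_n^{-jk}\big)=c_0+\sum_k 2c_k\cos(2\pi jk/n)$ (with the evident modification of the middle term when $n$ is even), which is manifestly real; alternatively one may simply invoke the spectral theorem, since $\bm{C}$ is a real symmetric matrix. One should also note that although the coordinates of $\bm{\theta}^{(j)}$ as written in Eq.~(\ref{eq:twisted_state_eq}) need not lie in $[-\pi,\pi]$, they may be reduced modulo $2\pi$ without changing $\bm{x}_0^{(j)}$ or its argument, so Definition~\ref{def:equi_point} applies as stated.

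With $\lambda_j\in\R$ established, statement (1) follows by applying Proposition~\ref{prop:twisted_state} with $\bm{A}=\bm{C}$ and $\bm{x}_0=\bm{x}_0^{(j)}$, and statement (2) follows by applying Proposition~\ref{prop:twisted_state_KM} to the same data. I do not expect any genuine obstacle: the substance of the proposition is the recognition that the circulant eigenvectors are precisely the ``twisted states'' $e^{\i\bm{\theta}^{(j)}}$, after which the general results of Section~\ref{sec:eq_points_kuramoto} apply verbatim. If a self-contained verification of (2) were preferred, one could instead substitute $\theta_r=2\pi jr/n$ directly into $\sum_s c_{(r-s)\bmod n}\sin(\theta_s-\theta_r)$ and observe that, after reindexing, this equals $\sum_k c_k\sin(2\pi jk/n)$, which vanishes by the same $k\leftrightarrow n-k$ pairing (now using that $\sin$ is odd) — but routing through Proposition~\ref{prop:twisted_state_KM} is cleaner.
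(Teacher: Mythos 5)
Your proposal is correct and follows exactly the paper's route: the paper also obtains this proposition by noting (via the Circulant Diagonalization Theorem and the symmetry of $\bm{C}$) that $e^{\i\bm{\theta}^{(j)}}$ is an eigenvector with real eigenvalue $\lambda_j$, and then invoking Proposition \ref{prop:twisted_state} and Proposition \ref{prop:twisted_state_KM}. Your explicit verification that $\lambda_j\in\R$ and the remark about reducing coordinates modulo $2\pi$ are just slightly more detailed versions of what the paper leaves implicit.
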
 

Based on these results, Fig. \ref{fig:eq_point_circulant_phases} portrays a graphic representation of an equilibrium point for a circulant network of Kuramoto oscillators, which is also called as a ``twisted state". Here, a system with $N = 50$ oscillators connected with a ring network, where each node has $k = 10$ connections in both directions. This solution follows Eq. (\ref{eq:twisted_state_eq}) with $j = 1$ (a), which is equivalent to take the $2^{\mathrm{nd}}$ eigenvector of the matrix $\bm{A}$. This leads to an equilibrium point where the oscillators' phases are equally spaced between $0$ and $2\pi$ (represented in color-code). Moreover, we also consider the case where the equilibrium point follows Eq. (\ref{eq:twisted_state_eq}), but with $j = 3$ (b), which is equivalent to take the $4^{\mathrm{th}}$ eigenvector of $\bm{A}$. In this case, the distribution of the phases changes, but it is still an equilibrium point. These kind of solution is also known as ``twisted states".
\begin{figure}[htb]
    \centering
    \includegraphics[width=0.75\textwidth]{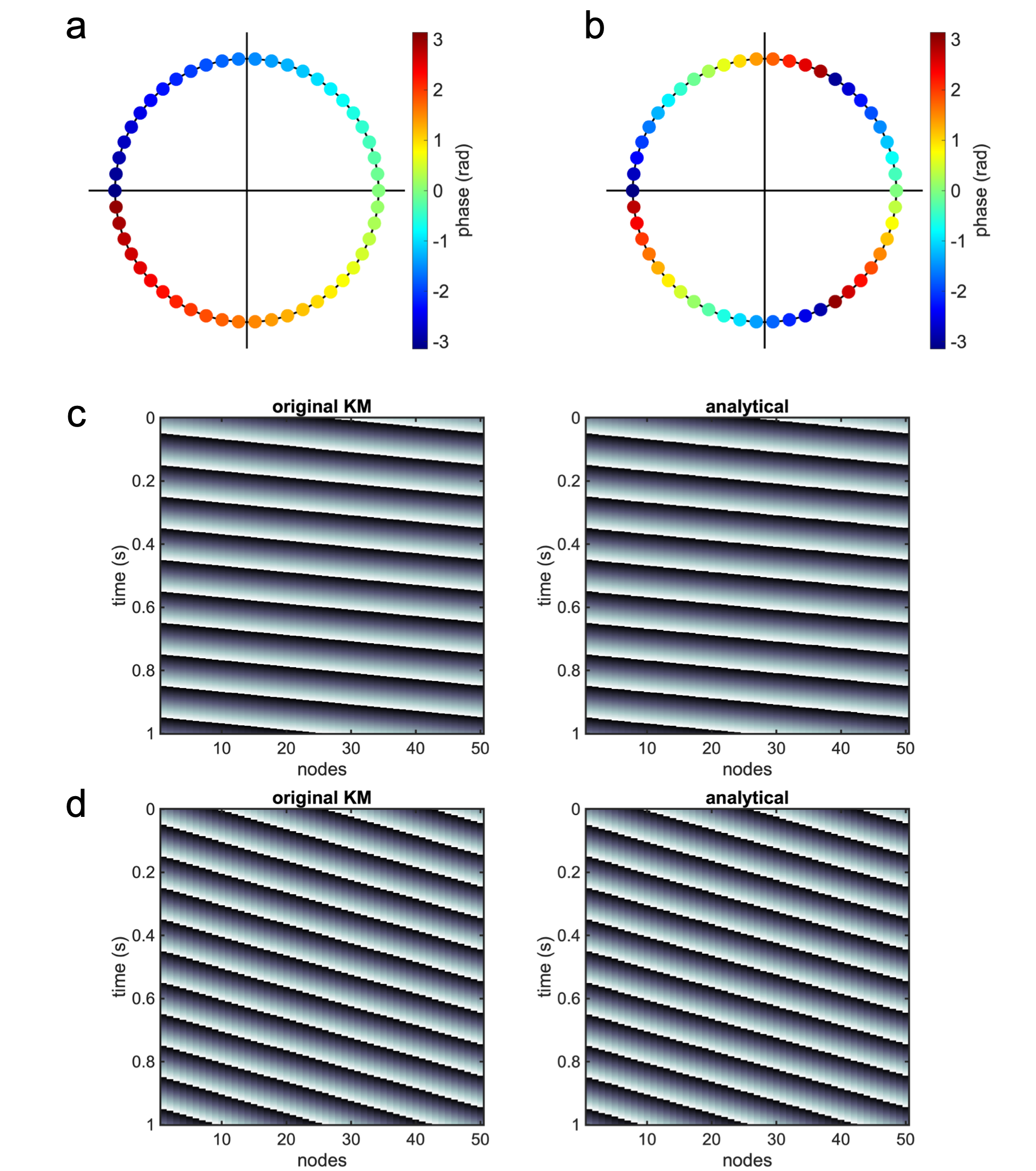}
    \caption{Graphic representation of an equilibrium point for a circulant network following Eq. (\ref{eq:twisted_state_eq}) with $j = 1$ (a) and $N = 50$ nodes. This is equivalent to take the $2^{\mathrm{nd}}$ eigenvector of the matrix $\bm{A}$. The phase of each oscillator is represented in color-code, which is equally distributed between $0$ and $2\pi$. We also consider the case with $j = 3$, which is equivalent to take the $4^{\mathrm{th}}$ eigenvector of the matrix $\bm{A}$ (b). This kind of equilibrium point is also known as a ``twisted state". Spatiotemporal dynamics for both the original Kuramoto model and the new complex-valued one (analytical) depict the same behavior. Here, the initial conditions for panel (c) are given by the equilibrium point represented in (a), while for panel (d) are given by (b). In both cases, the difference between the oscillators' phases remains as time evolves.}
    \label{fig:eq_point_circulant_phases}
\end{figure}

Then, we use the equilibrium points represented in Figs. \ref{fig:eq_point_circulant_phases}a and \ref{fig:eq_point_circulant_phases}b as initial conditions for the temporal evolution of the system. The spatiotemporal dynamics for both the numerical (original) Kuramoto model and the complex-valued version (analytical) is represented in (c) and (d), where the oscillators' phases are represented in color-code, and the initial conditions are given, respectively, by the solutions depicted in (a) and (b). We observe that the initial configuration remains as time evolves, which corroborates the solution as an equilibrium point for both cases. The oscillators' natural frequency is given by $\omega = 20 \pi$, and the coupling strength is given by $\epsilon = 1.0$. 

Furthermore, one can observe that a difference between the dynamics of these two cases, where the ``shape" of these waves differs as the equilibrium point changes. In this case, the wave pattern in (c) depicts diagonal structures with a lower slope (in comparison with the horizontal line) than (d) since the solutions are given by the $2^{\mathrm{nd}}$ and $4^{\mathrm{th}}$ eigenvectors, respectively.

\subsection{Complete classification of equilibrium points for complete graphs}\label{subsec:eq_points_complete_graph}

A particular case of circulant networks is given by complete networks, in which every oscillator is connected to any other. In other words, the topological connection is given by the complete graph $K_N$. In this case, based on the previous results, we can give an exhaustive characterization of equilibrium points.

Let $\bm{A}_N$ be the adjacency matrix of $K_N$. We have the following proposition.
\begin{prop}\label{prop:K_N}
Let $\bm{\theta}_0=(\theta_1, \ldots, \theta_N)$ be an initial condition such that 
\[ \sum_{k=1}^N e^{\i \theta_k}=0 .\] 
Then $e^{\i \bm{\theta}_0}$ is an eigenvector of $\bm{A}_N$ associated with the eigenvalue $\lambda=-1$. Consequently, $\bm{\theta}_0$ is an equilibrium point of the complex-valued model -- Eq. (\ref{eq:analytical_KM}) -- and the original KM -- Eq. (\ref{eq:KM}).
\end{prop}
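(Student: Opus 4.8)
The plan is to verify directly that $e^{\i\bm{\theta}_0}$ is an eigenvector of $\bm{A}_N$ with eigenvalue $-1$, and then invoke Proposition \ref{prop:twisted_state} and Proposition \ref{prop:twisted_state_KM} to conclude both equilibrium statements. The key observation is that the adjacency matrix of the complete graph $K_N$ satisfies $\bm{A}_N = \bm{J} - \bm{I}$, where $\bm{J}$ is the all-ones matrix and $\bm{I}$ is the identity. So the whole computation reduces to understanding how $\bm{J}$ acts on the vector $\bm{x}_0 = e^{\i\bm{\theta}_0} = (e^{\i\theta_1},\dots,e^{\i\theta_N})^T$.

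The main step is the following. Fix a row index $i$. The $i$-th entry of $\bm{A}_N\bm{x}_0$ is $\sum_{j=1}^N (a_N)_{ij} e^{\i\theta_j} = \sum_{j\neq i} e^{\i\theta_j}$, since the diagonal entries of $\bm{A}_N$ are zero and the off-diagonal entries are $1$. Now I would use the hypothesis $\sum_{k=1}^N e^{\i\theta_k} = 0$ to rewrite $\sum_{j\neq i} e^{\i\theta_j} = \left(\sum_{k=1}^N e^{\i\theta_k}\right) - e^{\i\theta_i} = -e^{\i\theta_i}$. Since this holds for every $i$, we get $\bm{A}_N \bm{x}_0 = -\bm{x}_0$, i.e.\ $\bm{x}_0$ is an eigenvector of $\bm{A}_N$ associated with the eigenvalue $\lambda = -1$. (One should note in passing that $\bm{x}_0 \neq \bm{0}$ automatically, since each coordinate $e^{\i\theta_k}$ has modulus $1$, so this is a genuine eigenvector.)

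With this in hand the rest is immediate: $\lambda = -1$ is a real eigenvalue, so Proposition \ref{prop:twisted_state} gives that $\bm{\theta}_0$ is an equilibrium point of the analytical KM (Eq.~(\ref{eq:analytical_KM})), and Proposition \ref{prop:twisted_state_KM} gives that $\bm{\theta}_0$ is an equilibrium point of the original KM (Eq.~(\ref{eq:KM})).

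I do not anticipate a real obstacle here — the proof is a one-line matrix computation plus two citations. The only thing to be slightly careful about is the bookkeeping with the diagonal: writing $\bm{A}_N = \bm{J} - \bm{I}$ cleanly separates the ``sum over all $k$'' (which vanishes by hypothesis) from the ``subtract the diagonal term'' (which produces the $-e^{\i\theta_i}$), and makes transparent why the eigenvalue is exactly $-1$ rather than, say, $N-1$ (which is the eigenvalue on the all-ones vector, the case explicitly excluded by the hypothesis $\sum_k e^{\i\theta_k}=0$).
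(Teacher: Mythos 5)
Your proof is correct and follows essentially the same route as the paper: the paper computes $(\bm{A}_N+\bm{I})\,e^{\i\bm{\theta}_0}=\bm{0}$ using the hypothesis $\sum_k e^{\i\theta_k}=0$, which is exactly your entry-wise computation $\sum_{j\neq i}e^{\i\theta_j}=-e^{\i\theta_i}$ phrased via $\bm{A}_N=\bm{J}-\bm{I}$, and then both arguments conclude by citing Propositions \ref{prop:twisted_state} and \ref{prop:twisted_state_KM}. Your side remark that $\bm{x}_0\neq\bm{0}$ is a nice touch but not a substantive difference.
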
 
\begin{proof}
We have
\[ (\bm{A}_N+\bm{I}) \begin{pmatrix} e^{\i \theta_1} \\ \vdots \\ e^{\i \theta_N} \end{pmatrix}= 
 \begin{pmatrix}
\sum_{k=1}^N e^{\i \theta_k}\\
\vdots \\
\sum_{k=1}^N e^{\i \theta_k}
\end{pmatrix} = \begin{pmatrix}  0 \\ \vdots \\ 0 \end{pmatrix} .\] 

In other words, $e^{\i\bm{\theta}_{0}}$ is an eigenvector of $\bm{A}_N$ associated with the eigenvalue $\lambda=-1$. By Proposition \ref{prop:twisted_state} and Proposition \ref{prop:twisted_state_KM}, $\bm{\theta}_0$ is an equilibrium point of the complex-valued model -- Eq. (\ref{eq:analytical_KM}) -- and the original KM -- Eq. (\ref{eq:KM}) -- as claimed.
\end{proof}

We can go further to classify all equilibrium points of the classical Kuramoto model -- Eq. (\ref{eq:KM}). Suppose $\bm{\theta}_0=(\theta_1, \ldots, \theta_N)$ is an equilibrium point of the KM -- Eq. (\ref{eq:KM})--not encompassed by Proposition \ref{prop:K_N}, that is, satisfying 
\[ \sum_{i=1}^{N} e^{\i \theta_i} \neq 0 .\] 
By taking the conjugate of both sides, this also implies that 
\[ \sum_{i=1}^{N} e^{-\i \theta_i} \neq 0 .\] 

Since $\bm{\theta}_0=(\theta_1, \ldots, \theta_N)$ is an equilibrium point, for all $1 \leq i \leq N$ 
\begin{equation} \label{eq:K_N}
 \sum_{j=1}^N \sin(\theta_j-\theta_i)=0.
\end{equation} 
Using Euler's formula (\ref{eq:euler}), Eq. (\ref{eq:K_N}) becomes

\[ \sum_{j=1}^N e^{\i (\theta_j -\theta_i)} -\sum_{j=1}^N e^{-\i (\theta_j-\theta_i)} =0.\] 

We can rewrite this as 
\[ e^{-\i \theta_i} \left(\sum_{j=1}^N e^{\i \theta_j} \right) -e^{ \i \theta_i} \left( \sum_{j=1}^N e^{-\i \theta_j} \right) =0 .\] 

We then see that 
\[ e^{2 \i \theta_i}=\dfrac{\sum_{j=1}^N e^{\i \theta_j}}{\sum_{j=1}^N e^{-\i \theta_j}} .\] 

As the right hand side does not depend on $i$, we conclude that 
\[ e^{2 \i \theta_i}=e^{2 \i \theta_j} , \forall 1 \leq i ,j \leq N .\] 
Assuming, up to a rotation of the frame of reference, that $\theta_1=0$ (see Remark \ref{rem:theta_1}). Then the above equation implies that 
\[ e^{2 \i \theta_j}=1 ,\forall 1 \leq j \leq N .\] 
In other words $\theta_j \in \{0, \pi \}$. It is straightforward to check that if $\theta_j \in \{0, \pi \}$ then $\bm{\theta}_{0}$ is indeed an equilibrium point of Eq. (\ref{eq:KM}). Note that the property $\theta_j=0$ for all $j$ is a special case of this condition. In summary, we just proved the following. 
\begin{prop}
The point $\bm{\theta}_0=(\theta_1, \ldots, \theta_N)$ is an equilibrium point of the KM -- Eq. (\ref{eq:KM}) -- if and only if $\bm{\theta}_0$ satisfies one of the following conditions. 
\begin{enumerate}
\item The $\theta_i$s differ by integer multiples of $\pi$.
\item $\sum\limits_{i=1}^{N} e^{\i \theta_i} = 0$.
\end{enumerate} 
\end{prop}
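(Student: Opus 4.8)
The plan is to prove the two implications of the biconditional separately, using the material already established for the complete graph $K_N$. Here the adjacency matrix $\bm{A}_N$ has $a_{ij}=1$ for $i\neq j$, and since $\sin(\theta_i-\theta_i)=0$ the equilibrium condition recalled after Definition \ref{def:classical_equi} simply reads $\sum_{j=1}^N\sin(\theta_j-\theta_i)=0$ for every $i$.

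\emph{Sufficiency.} Suppose first that $\sum_{i=1}^N e^{\i\theta_i}=0$. Then Proposition \ref{prop:K_N} applies directly and $\bm{\theta}_0$ is an equilibrium point of Eq. (\ref{eq:KM}). Suppose instead that the $\theta_i$ differ by integer multiples of $\pi$. Then $\theta_j-\theta_i\in\pi\Z$ for every pair $(i,j)$, so $\sin(\theta_j-\theta_i)=0$ and hence $\sum_{j=1}^N\sin(\theta_j-\theta_i)=0$ for all $i$; thus $\bm{\theta}_0$ is again an equilibrium point.

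\emph{Necessity.} Assume $\bm{\theta}_0$ is an equilibrium point and that condition (2) fails, i.e.\ $S:=\sum_{i=1}^N e^{\i\theta_i}\neq 0$; taking complex conjugates, $\overline S=\sum_{i=1}^N e^{-\i\theta_i}\neq 0$ as well. Inserting Euler's formula $\sin x=(e^{\i x}-e^{-\i x})/(2\i)$ into $\sum_{j=1}^N\sin(\theta_j-\theta_i)=0$ and pulling $e^{-\i\theta_i}$ and $e^{\i\theta_i}$ out of the two resulting sums yields $e^{-\i\theta_i}S-e^{\i\theta_i}\overline S=0$, that is, $e^{2\i\theta_i}=S/\overline S$ for every $i$. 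Since the right-hand side does not depend on $i$, we get $e^{2\i\theta_i}=e^{2\i\theta_j}$ for all $i,j$, which forces $2(\theta_i-\theta_j)\in 2\pi\Z$, i.e.\ $\theta_i-\theta_j\in\pi\Z$; this is exactly condition (1).

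I expect no serious obstacle: the argument is essentially the computation carried out in the paragraphs preceding the statement, reorganized as an equivalence. The only points that require a little care are the remark that $\overline S\neq 0$ (so that $S/\overline S$, and hence the common value of $e^{2\i\theta_i}$, is well defined and genuinely independent of $i$) and the elementary fact that $e^{2\i\alpha}=1$ forces $\alpha\in\pi\Z$. One should also note that the two conditions are not mutually exclusive — for instance $\theta_i=0$ for all $i$ satisfies both — but this overlap is harmless for the stated equivalence.
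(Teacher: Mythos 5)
Your proposal is correct and follows essentially the same route as the paper: sufficiency of condition (2) via Proposition \ref{prop:K_N}, sufficiency of condition (1) by the vanishing of $\sin(\theta_j-\theta_i)$, and necessity by the Euler-formula computation giving $e^{2\i\theta_i}=S/\overline{S}$ independent of $i$. The only (harmless) difference is that you conclude $\theta_i-\theta_j\in\pi\Z$ directly rather than first normalizing $\theta_1=0$ as the paper does.
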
 

\section{Equilibrium points for phase-lag coupled oscillators}\label{sec:eq_points_phase_lag}

In this section, we generalize the results in the previous section to find equilibrium points for the Kuramoto networks where the oscillators are coupled with phase-lag. 

The original KM can be described, in this case, as:
\begin{equation} \label{eq:phase_lag}
    \frac{d\theta_{i}}{dt} = \omega_{i} + \epsilon \sum\limits_{j=1}^{N} a_{ij} \sin{(\theta_{j} - \theta_{i} - \phi)},
\end{equation}
where $N$ is the number of oscillators, $\epsilon$ is coupling strength, $\bm{A}=(a_{ij})$ defines the adjacency matrix, and $\phi$ is the phase-lag. Using this new parameter $\phi$ we can transition from a purely attractive coupling ($\phi = 0$) to a purely repulsive one ($\phi = \sfrac{\pi}{2}$). Here, following the same idea described in Sec. \ref{sec:eq_points_kuramoto}, we assume all intrinsic frequencies $\omega_i$ to be equal and we consider the rotating frame, in which $\omega_{i} = 0  \hspace{0.3cm} \left(\forall i \in [1,N]\right)$.

Furthermore, using the same technique of complexification, the complex-valued model can be described as:
\begin{equation} \label{eq:phase_lag_analytic}
\frac{d \theta_i}{dt}=\epsilon \sum_{j=1}^N a_{ij} \left(\sin(\theta_j-\theta_i-\phi)-\i \cos(\theta_j-\theta_i-\phi) \right).
\end{equation} 
By Euler's formula we have 
\begin{equation*}
\i\sin(\theta_j-\theta_i-\phi)+ \cos(\theta_j-\theta_i-\phi) =e^{\i(\theta_j-\theta_i-\phi)}.
\end{equation*} 
The complex-valued model becomes 
\begin{equation*}
\i \frac{d \theta_i}{dt}= \epsilon e^{-\i \theta_i} \sum_{j=1}^N a_{ij} e^{-\i \phi} e^{\i \theta_j}.
\end{equation*} 
Equivalently
\begin{equation*}
 \i e^{\i \theta_i}  \frac{d \theta_i}{dt}= \epsilon \sum_{j=1}^N a_{ij} e^{-\i \phi} e^{\i \theta_j}.
\end{equation*} 
Let $x_i=e^{\i \theta_i}$. Then $\frac{dx_{i}}{dt}= \i e^{\i \theta_i}  \frac{d \theta_i}{dt}$. Therefore the above equation becomes 
\begin{equation*} 
\frac{dx_{i}}{dt}= \epsilon \sum_{j=1}^N a_{ij} e^{-\i \phi} x_{j}.
\end{equation*}

The general solution of this linear ODE is
\begin{equation}
\bm{x}= e^{t \bm{K}} \bm{x}(0),
\label{eq:solution_analytical_model}
\end{equation}
where $\bm{x}=(x_1, \ldots, x_N)$ and $\bm{K}=\epsilon e^{-\i \phi} \bm{A}$. 

Finally, we explain how to get a real solution out of the complex solution for the complex-valued model described by Eq. (\ref{eq:phase_lag_analytic}). Let $\bm{\theta}=\bm{\theta}_{\re}+\i \bm{\theta}_{\im}$ be the decomposition of $\bm{\theta}$ into the real and imaginary parts. Then we have 
\begin{equation}
\bm{x}=e^{\i \bm{\theta}_{\re}- \bm{\theta}_{\im}}=e^{-\bm{\theta}_{\im}} e^{\i \bm{\theta}_{\re}}.
\end{equation}
We see that $\bm{\theta}_{\re}$ is thus the argument of the analytical solution $\bm{x}$. In particular, we can take $\bm{\theta}_{\re} \in [-\pi, \pi]$. 

We have the following definition which naturally generalizes Definition \ref{def:equi_point}.
\begin{definition} \label{def:equi_point_phase_lag}
We say that $\bm{\theta}_0 \in [-\pi, \pi]^N$ is an equilibrium point of the phase-lag complex-valued model -- Eq. (\ref{eq:phase_lag_analytic}) if for all times $t \geq 0$ 
\[ \arg(\bm{x(t)})=\arg(e^{\epsilon e^{-\i \phi} \bm{A} t} \bm{x}_0)=\arg(\bm{x}_0)=\bm{\theta}_0,  \]
where $\bm{x}_0=e^{\i \bm{\theta}_0}$.
\end{definition} 

We have the following proposition about equilibrium points of the complex-valued. 
\begin{prop} \label{prop:twisted_stated_phase_lag}
Suppose $\bm{x}_0=e^{\i \bm{\theta}_0}$ is an eigenvector of $\bm{A}=(a_{ij})$ associated with the eigenvalue $\lambda$. Suppose further that $\lambda e^{-\i \phi} \in \R$. Then $\bm{\theta}_0=(\theta_1, \theta_2, \ldots, \theta_N)$ is an equilibrium point of the phase-lag complex-valued model -- Eq. (\ref{eq:phase_lag_analytic})
 \[\frac{d \theta_i}{dt}=\epsilon \sum_{j=1}^N a_{ij} \left(\sin(\theta_j-\theta_i-\phi)-\i \cos(\theta_j-\theta_i-\phi) \right).\]
\end{prop}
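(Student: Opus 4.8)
The plan is to mirror the proof of Proposition \ref{prop:twisted_state}, now replacing the operator $\epsilon\bm{A}$ by $\bm{K}=\epsilon e^{-\i\phi}\bm{A}$, which is exactly the matrix appearing in the solution formula Eq. (\ref{eq:solution_analytical_model}) derived just above the statement. First I would record that every solution of the phase-lag analytical KM -- Eq. (\ref{eq:phase_lag_analytic}) -- with initial value $\bm{x}_0=e^{\i\bm{\theta}_0}$ is $\bm{x}(t)=e^{t\bm{K}}\bm{x}_0$, so that checking Definition \ref{def:equi_point_phase_lag} reduces to verifying $\arg(e^{t\bm{K}}\bm{x}_0)=\arg(\bm{x}_0)$ for all $t\geq 0$.

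The second step is purely algebraic: if $\bm{A}\bm{x}_0=\lambda\bm{x}_0$, then $\bm{K}\bm{x}_0=\epsilon e^{-\i\phi}\lambda\,\bm{x}_0$, i.e.\ $\bm{x}_0$ is an eigenvector of $\bm{K}$ with eigenvalue $\mu:=\epsilon e^{-\i\phi}\lambda$. Applying Lemma \ref{lem:exponential} with $\bm{B}=t\bm{K}$ gives $e^{t\bm{K}}\bm{x}_0=e^{t\mu}\bm{x}_0$. Now the hypothesis enters: since $\lambda e^{-\i\phi}\in\R$ and $\epsilon\in\R$, the scalar $\mu=\epsilon\,(\lambda e^{-\i\phi})$ is real, hence $t\mu\in\R$ and $e^{t\mu}$ is a strictly positive real number. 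Because multiplication by a positive real does not change the argument of any coordinate, we get $\arg(\bm{x}(t))=\arg(e^{t\mu}\bm{x}_0)=\arg(\bm{x}_0)=\bm{\theta}_0$, which is precisely the equilibrium condition of Definition \ref{def:equi_point_phase_lag}.

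I do not expect any genuine obstacle; the argument is essentially the phase-lag analogue of Proposition \ref{prop:twisted_state}. The only point that deserves a moment's care is the one emphasized above: $\arg$ is invariant only under scaling by \emph{positive} reals, so what is actually needed is that $\mu$ (equivalently $t\mu$) be real, which is guaranteed by the combination of $\lambda e^{-\i\phi}\in\R$ and $\epsilon\in\R$ rather than by $\lambda e^{-\i\phi}\in\R$ alone. One might also note, as a forward pointer to the analogue of Proposition \ref{prop:twisted_state_KM}, that the conjugation trick used there requires $\bm{K}$ to have real entries, which generally fails when $\phi\neq 0$; but for the analytical model Eq. (\ref{eq:phase_lag_analytic}) the computation above is self-contained and complete.
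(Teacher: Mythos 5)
Your proof is correct and follows essentially the same route as the paper: both apply Lemma \ref{lem:exponential} to $t\epsilon e^{-\i\phi}\bm{A}$ (your $t\bm{K}$), obtain $\bm{x}(t)=e^{t\epsilon e^{-\i\phi}\lambda}\bm{x}_0$, and use the reality of $\lambda e^{-\i\phi}$ to conclude the argument is unchanged. Your explicit remark that the scalar $e^{t\mu}$ is a \emph{positive real}, so that multiplication by it preserves arguments, is exactly the point the paper relies on (stated there somewhat more tersely), so nothing further is needed.
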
 
\begin{proof}

Because $\bm{x}_0$ is an eigenvector of $\bm{A}$ associated with $\lambda$, we have $\bm{A} \bm{x}_0= \lambda \bm{x}_0$. Therefore, applying Lemma \ref{lem:exponential} with $B=t\epsilon e^{-\i \phi}\bm{A}$, we have \[\bm{x}(t)=e^{t\epsilon e^{-\i \phi}\bm{A}} \bm{x}_0=e^{t\epsilon e^{-\i \phi}\lambda} \bm{x}_0. \]

Taking the argument of both sides we have 
\[ \arg(\bm{x}(t))=\arg(e^{t \epsilon e^{-\i \phi}\lambda} \bm{x}_0)=\text{Im}(t \epsilon e^{-\i \phi}\lambda) \arg(\bm{x}_0) =\arg(\bm{x}_0)=\bm{\theta}_0.\] 
Here we use the crucial assumption that $\lambda e^{-\i \phi} \in \R$. We conclude that $\bm{\theta}_0$ is an equilibrium point of the phase-lag complex-valued model in the sense of Definition \ref{def:equi_point_phase_lag}.
\end{proof}

Next, we show that the equilibrium points described in Proposition  \ref{prop:twisted_stated_phase_lag} are equilibrium points of the original phase-lag KM (\ref{eq:phase_lag}) as well (note that we have set $\omega=0$.) This is a direct generalization of Proposition \ref{prop:twisted_state_KM}. 
\begin{prop} \label{prop:twisted_stated_phase_lag_KM}
Suppose $\bm{x}_0=e^{\i \bm{\theta}_0}$ is an eigenvector of $\bm{A}=(a_{ij})$ associated with the eigenvalue $\lambda$. Suppose further that $\lambda e^{-\i \phi} \in \R$ (or equivalently $-\phi+\arg(\lambda)$ is an integer multiple of $\pi$). Then $\bm{\theta}_0=(\theta_1, \theta_2, \ldots, \theta_N)$ is an equilibrium point of the phase-lag KM -- Eq. (\ref{eq:phase_lag})
  \[\frac{d\theta_{i}}{dt} = \epsilon \sum\limits_{j=1}^{N} a_{ij} \sin{(\theta_{j} - \theta_{i} - \phi)}. \] 
\end{prop}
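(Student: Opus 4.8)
The plan is to mimic the proof of Proposition \ref{prop:twisted_state_KM}, using Euler's formula to rewrite $\sin(\theta_j-\theta_i-\phi)$ in exponential form, and then exploiting the eigenvector equation together with its conjugate. Concretely, I would first write
\[
\sin(\theta_j-\theta_i-\phi)=\frac{e^{\i(\theta_j-\theta_i-\phi)}-e^{-\i(\theta_j-\theta_i-\phi)}}{2\i},
\]
so that
\[
(2\i)\sum_{j=1}^N a_{ij}\sin(\theta_j-\theta_i-\phi)
= e^{-\i\theta_i}e^{-\i\phi}\sum_{j=1}^N a_{ij}e^{\i\theta_j}
- e^{\i\theta_i}e^{\i\phi}\sum_{j=1}^N a_{ij}e^{-\i\theta_j}.
\]
Since $\bm{x}_0=e^{\i\bm{\theta}_0}$ is an eigenvector of $\bm{A}$ with eigenvalue $\lambda$, we have $\sum_j a_{ij}e^{\i\theta_j}=\lambda e^{\i\theta_i}$, and taking conjugates (using $a_{ij}\in\R$) gives $\sum_j a_{ij}e^{-\i\theta_j}=\bar\lambda e^{-\i\theta_i}$. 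Substituting, the right-hand side collapses to $\lambda e^{-\i\phi}-\bar\lambda e^{\i\phi} = \lambda e^{-\i\phi} - \overline{\lambda e^{-\i\phi}} = 2\i\,\mathrm{Im}(\lambda e^{-\i\phi})$.

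The key point is then the hypothesis: $\lambda e^{-\i\phi}\in\R$ means exactly $\mathrm{Im}(\lambda e^{-\i\phi})=0$, so the whole expression vanishes and $\sum_j a_{ij}\sin(\theta_j-\theta_i-\phi)=0$ for every $i$, which is precisely the condition (from Definition \ref{def:classical_equi} applied to Eq. (\ref{eq:phase_lag}) with $\omega=0$) that $\bm{\theta}_0$ be an equilibrium point. I would also remark in passing on the parenthetical equivalence in the statement: writing $\lambda=|\lambda|e^{\i\arg(\lambda)}$, we get $\lambda e^{-\i\phi}=|\lambda|e^{\i(\arg(\lambda)-\phi)}$, which is real iff $\arg(\lambda)-\phi\in\pi\Z$, i.e. $-\phi+\arg(\lambda)$ is an integer multiple of $\pi$ (when $\lambda\neq 0$; the case $\lambda=0$ is trivial since then both sums vanish).

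There is essentially no serious obstacle here — the argument is a direct generalization of Proposition \ref{prop:twisted_state_KM}, with the only new ingredient being the bookkeeping of the extra factor $e^{\mp\i\phi}$ and the observation that $\lambda e^{-\i\phi}-\overline{\lambda e^{-\i\phi}}$ is twice $\i$ times its imaginary part. The one place to be slightly careful is that $\lambda$ need no longer be real (unlike in Proposition \ref{prop:twisted_state_KM}), so one must conjugate $\lambda$ correctly rather than treating it as fixed; keeping $\bar\lambda$ explicit until the final step avoids any slip. I would present the computation as a single \texttt{align*} block ending in $2\i\,\mathrm{Im}(\lambda e^{-\i\phi})=0$, then conclude.
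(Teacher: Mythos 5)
Your proposal is correct and follows essentially the same route as the paper's own proof: Euler's formula, the eigenvector relation together with its conjugate (keeping $\bar\lambda$ since $\lambda$ need not be real), and the collapse to $\lambda e^{-\i\phi}-\overline{\lambda e^{-\i\phi}}=2\i\,\mathrm{Im}(\lambda e^{-\i\phi})=0$ by hypothesis. Your added remark on the equivalence $\lambda e^{-\i\phi}\in\R \iff -\phi+\arg(\lambda)\in\pi\Z$ (with the trivial case $\lambda=0$) is a small but harmless bonus beyond what the paper states.
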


\begin{proof}
We need to show that for $1 \leq i \leq N$
\[ \sum\limits_{j=1}^{N} a_{ij} \sin{(\theta_{j} - \theta_{i} - \phi)} =0 .\]

First of all, since $\bm{x}_0$ is an eigenvector, we know that for all $1 \leq i \leq N$ 
\[ \sum_{j=1}^N a_{ij} e^{\i \theta_j}=\lambda e^{\i \theta_i} .\]

Taking the conjugation of both sides and noting that $a_{ij} \in \R$, we have 
 \[ \sum_{j=1}^N a_{ij} e^{-\i \theta_j}=\bar{\lambda} e^{-\i \theta_i} .\]
By Euler's formula \eqref{eq:euler}
\[ \sin(\theta_j-\theta_i-\phi)=\frac{e^{\i (\theta_j-\theta_i-\phi)}-e^{-\i (\theta_j -\theta_i-\phi)}}{2 \i } .\] 
Hence 
\begin{align*}
(2 \i) \sum_{j=1}^Na_{ij} \sin(\theta_j-\theta_i-\phi) &= \sum_{j=1}^N a_{ij} \left[ e^{\i (\theta_j-\theta_i-\phi)}- e^{-\i(\theta_j-\theta_i-\phi)} \right] \\
&=e^{-\i \theta_i}e^{-\i \phi} \sum_{j=1}^N a_{ij} e^{\i \theta_j} - e^{\i \theta_i}e^{\i \phi} \sum_{j=1}^N a_{ij} e^{-\i \theta_j}\\
&= e^{-\i \theta_i} e^{-\i \phi} \lambda e^{\i \theta_i}-e^{\i \theta_i} e^{\i \phi} \bar{\lambda} e^{-\i \theta_i}\\
&=\lambda e^{-\i \phi}-\bar{\lambda}e^{\i \phi} \\ &= \lambda e^{-\i \phi}-\overline{\lambda e^{-\i \phi}}=0.
\end{align*} 
The last inequality comes from the assumption that $\lambda e^{-\i \phi} \in \R$. This completes the proof.
\end{proof}
Note that by the same argument, we have the following slightly more general statement.   
\begin{prop}\label{prop:twisted_stated_phase_lag_KM_gen}
Let $\bm{x}_0=e^{\i \bm{\theta}_0}=(e^{\i \theta_1}, \ldots, e^{\i \theta_N})$. Suppose that for each $1 \leq i \leq N$, there exists a pair $(\lambda_i, \phi_i)$ such that $\lambda_i e^{-\i \phi_i} \in \R$ and   
\[ \sum_{j=1}^N a_{ij} e^{\i \theta_j}=\lambda_i e^{\i \theta_i} .\]
Then $\bm{\theta}_0=(\theta_1, \theta_2, \ldots, \theta_N)$ is an equilibrium point of the phase-lag KM -- Eq. \eqref{eq:phase_lag}

   \[\frac{d\theta_{i}}{dt} = \epsilon \sum\limits_{j=1}^{N} a_{ij} \sin{(\theta_{j} - \theta_{i} - \phi_i)}, \forall 1 \leq i \leq N \] 
\end{prop}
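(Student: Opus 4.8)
The plan is to imitate the proof of Proposition~\ref{prop:twisted_stated_phase_lag_KM} essentially verbatim, but carried out one coordinate at a time, since the hypotheses are now indexed by $i$. Concretely, I would fix $1 \le i \le N$ and aim to verify the equilibrium condition $\sum_{j=1}^{N} a_{ij}\sin(\theta_j-\theta_i-\phi_i)=0$ for that particular $i$; as $i$ is arbitrary, this yields the claim for all rows.

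First I would record the two algebraic identities available for this fixed $i$: the hypothesis gives $\sum_{j=1}^{N} a_{ij}e^{\i\theta_j}=\lambda_i e^{\i\theta_i}$, and conjugating (using $a_{ij}\in\R$) gives $\sum_{j=1}^{N} a_{ij}e^{-\i\theta_j}=\bar{\lambda}_i e^{-\i\theta_i}$. Next I would expand $\sin(\theta_j-\theta_i-\phi_i)$ via Euler's formula \eqref{eq:euler}, pull the factors $e^{\mp\i\theta_i}e^{\mp\i\phi_i}$ out of the two resulting sums, and substitute the two identities to obtain
\[ (2\i)\sum_{j=1}^{N} a_{ij}\sin(\theta_j-\theta_i-\phi_i) = \lambda_i e^{-\i\phi_i} - \bar{\lambda}_i e^{\i\phi_i} = \lambda_i e^{-\i\phi_i} - \overline{\lambda_i e^{-\i\phi_i}}. \]
The assumption $\lambda_i e^{-\i\phi_i}\in\R$ then forces the right-hand side to vanish, completing the proof.

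The only genuinely new observation — and the reason the generalization is free — is that nothing in the computation for Proposition~\ref{prop:twisted_stated_phase_lag_KM} ever coupled different rows: the cancellation $\lambda e^{-\i\phi}=\overline{\lambda e^{-\i\phi}}$ takes place entirely within a single index $i$, so letting $(\lambda,\phi)$ depend on $i$ costs nothing. I do not anticipate a real obstacle here. The two points to be careful about are bookkeeping the conjugates (it is $\bar{\lambda}_i$, not $\lambda_i$, that appears in the conjugated relation, exactly as in the proof above), and noting that $\bm{x}_0$ need no longer be a genuine eigenvector of $\bm{A}$: it only satisfies the weaker ``rowwise'' eigen-relation that the $i$-th coordinate of $\bm{A}\bm{x}_0$ equals $\lambda_i$ times the $i$-th coordinate of $\bm{x}_0$, so Lemma~\ref{lem:exponential} is not invoked and one argues directly from the defining sum. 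I would close with the remark that taking all $(\lambda_i,\phi_i)$ equal recovers Proposition~\ref{prop:twisted_stated_phase_lag_KM}.
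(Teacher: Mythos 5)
Your proposal is correct and matches the paper's intended argument: the paper proves this proposition simply by noting it follows ``by the same argument'' as Proposition~\ref{prop:twisted_stated_phase_lag_KM}, which is exactly the rowwise computation you carry out, with the cancellation $\lambda_i e^{-\i\phi_i}-\overline{\lambda_i e^{-\i\phi_i}}=0$ happening within each fixed index $i$. Your added observation that $\bm{x}_0$ need not be a true eigenvector, so one argues directly from the defining sum rather than via Lemma~\ref{lem:exponential}, is accurate and consistent with the paper.
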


\begin{rem} \label{rem:relaxed_condition}
Proposition \ref{prop:twisted_stated_phase_lag_KM} is a special case of Proposition  \ref{prop:twisted_stated_phase_lag_KM_gen}, i.e the case when $\lambda_i=\lambda$ for all $1 \leq i \leq N$.
\end{rem} 

We then use computational analyses to further illustrate the equilibrium points for phase-lag Kuramoto oscillators. Figure \ref{fig:eq_points_phase_lag} depicts a graphic representation of this kind of solution for coupled Kuramoto oscillators. Here, the network is given with the same configuration than in Fig. \ref{fig:eq_point_circulant_phases}: $N = 50$ oscillators coupled in a ring network with $k = 10$, the coupling strength is given by $\epsilon = 1.0$, and the natural frequency is given by $\omega = 20 \pi$. Furthermore, the equilibrium points represented here are given by the $2^{\mathrm{nd}}$ eigenvector of the matrix $\bm{K}$, which has information about the network topology, as well as, the phase-lag factor $\phi$. Here, we consider $\phi = 1.00$ (a), and $\phi = \pi/2$, where one can observe that the role of this factor is given by a rotation in the solution in the circle.
\begin{figure}[htb]
    \centering
    \includegraphics[width=0.75\textwidth]{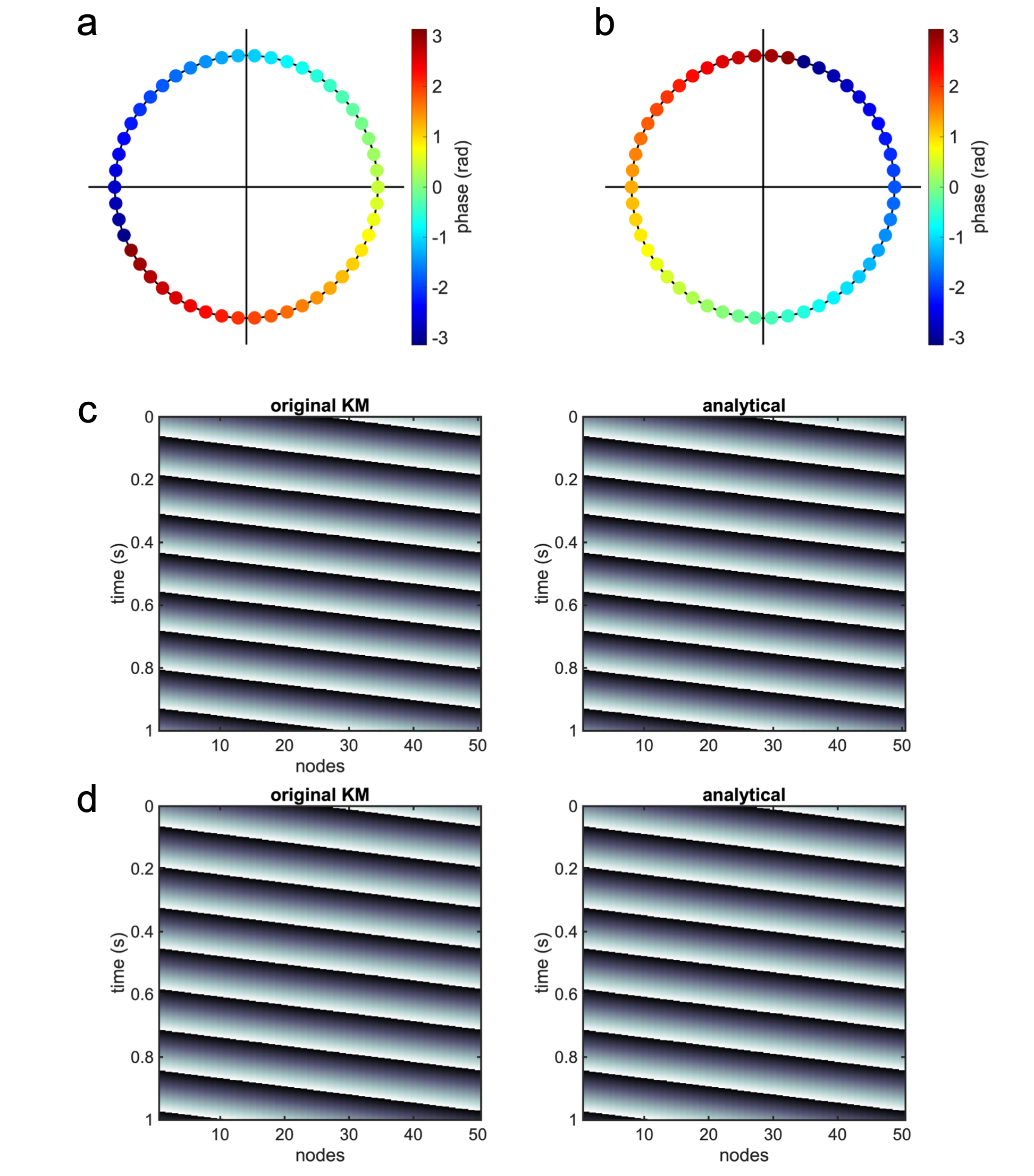}
    \caption{Graphic representation of equilibrium points for phase-lag Kuramoto oscillators. Here, a network with $N = 50$ is considered, where the adjacency matrix is given by ring network, where each node has $k = 10$ connections in both directions. Moreover, the solutions are given by the $2^{\mathrm{nd}}$ eigenvector of the matrix $\bm{A}$ with $\phi = 1.00$ (a), and $\phi = \pi/2$. The effect of the phase-lag is observed in the rotation of the solution. Spatiotemporal dynamics for this system (phase-lag Kuramoto oscillators) is represented in (c) and (d), where the initial conditions follow the equilibrium points represented in (a), and (b), respectively. Both the original Kuramoto model and complex-valued one (analytical) depict the same behavior, where the dynamics remains the same as time evolves, and the equilibrium points can be understood as traveling waves.}
    \label{fig:eq_points_phase_lag}
\end{figure}

In addition to that, Figs. \ref{fig:eq_points_phase_lag}c and \ref{fig:eq_points_phase_lag}d illustrate the spatiotemporal dynamics of the networks considering the initial conditions given by the solutions represented in panels (a) and (b). Here, the wave pattern is observed for the whole analysis, in both the original KM and the complex-valued model (analytical), which corroborates the findings about these equilibrium points. The only difference between panels (c) and (d) is the rotation in the solution observed in panels (a) and (b), therefore leading to a shifting in the oscillators' phases.

\begin{expl}\label{exp:example_1}
Let us consider a network with $4$ nodes and their topological connection is given by the following adjacency matrix 
\[\bm{A}=\begin{pmatrix}
0 &0 &1 &1 \\
1 &0 &0 & 1 \\
1 &1 &0 &0 \\
0 &1 &1 &0
\end{pmatrix} .\]

This is a circulant matrix which is not symmetric. By the CDT, we can see that $\bm{v}=(1, \i, -1, -\i)^{T}$ is an eigenvector of $\bm{A}$ associated with the eigenvalue $\lambda=-(1+\i)$. Note that we have $\bm{v}=e^{\i \bm{\theta}_0}$ with $\bm{\theta}_0=(0, \frac{\pi}{2}, \pi, -\frac{\pi}{2})$. Additionally, we have 
\[ \lambda=-(1+\i)=\sqrt{2}e^{\i (\frac{-3 \pi}{4})}.\]
Let $\phi=\frac{\pi}{4}$ then $\lambda e^{-\i \phi}=-\sqrt{2} \in \R$. By Propositions \ref{prop:twisted_stated_phase_lag_KM} and \ref{prop:twisted_stated_phase_lag} we know that $\bm{\theta}_0$ is an equilibrium point of the KM associated with $\bm{A}$ with phase-lag $\phi=\frac{\pi}{4}$. Concretely, the KM is described by the following system of differential equations 
\[\begin{cases}
\frac{d\theta_1}{dt}= \sin(\theta_3-\theta_1-\frac{\pi}{4})+\sin(\theta_4-\theta_1-\frac{\pi}{4}) \\
\frac{d\theta_2}{dt}= \sin(\theta_1-\theta_2-\frac{\pi}{4})+\sin(\theta_4-\theta_2-\frac{\pi}{4}) \\
\frac{d\theta_3}{dt}= \sin(\theta_1-\theta_3-\frac{\pi}{4})+\sin(\theta_2-\theta_3-\frac{\pi}{4}) \\
\frac{d\theta_4}{dt}= \sin(\theta_2-\theta_4-\frac{\pi}{4})+\sin(\theta_3-\theta_4-\frac{\pi}{4}). \\
\end{cases} \]

Figure \ref{fig:example_1_phase_lag} portrays a graphic representation of the equilibrium point for this example (a). Moreover, panel (b) brings the spatiotemporal dynamics for this network when the initial condition is given by the equilibrium point represented in (a). In this case, the dynamics is given by a traveling wave and corroborates the solution being an equilibrium point.
\begin{figure}[htb]
    \centering
    \includegraphics[width=0.85\textwidth]{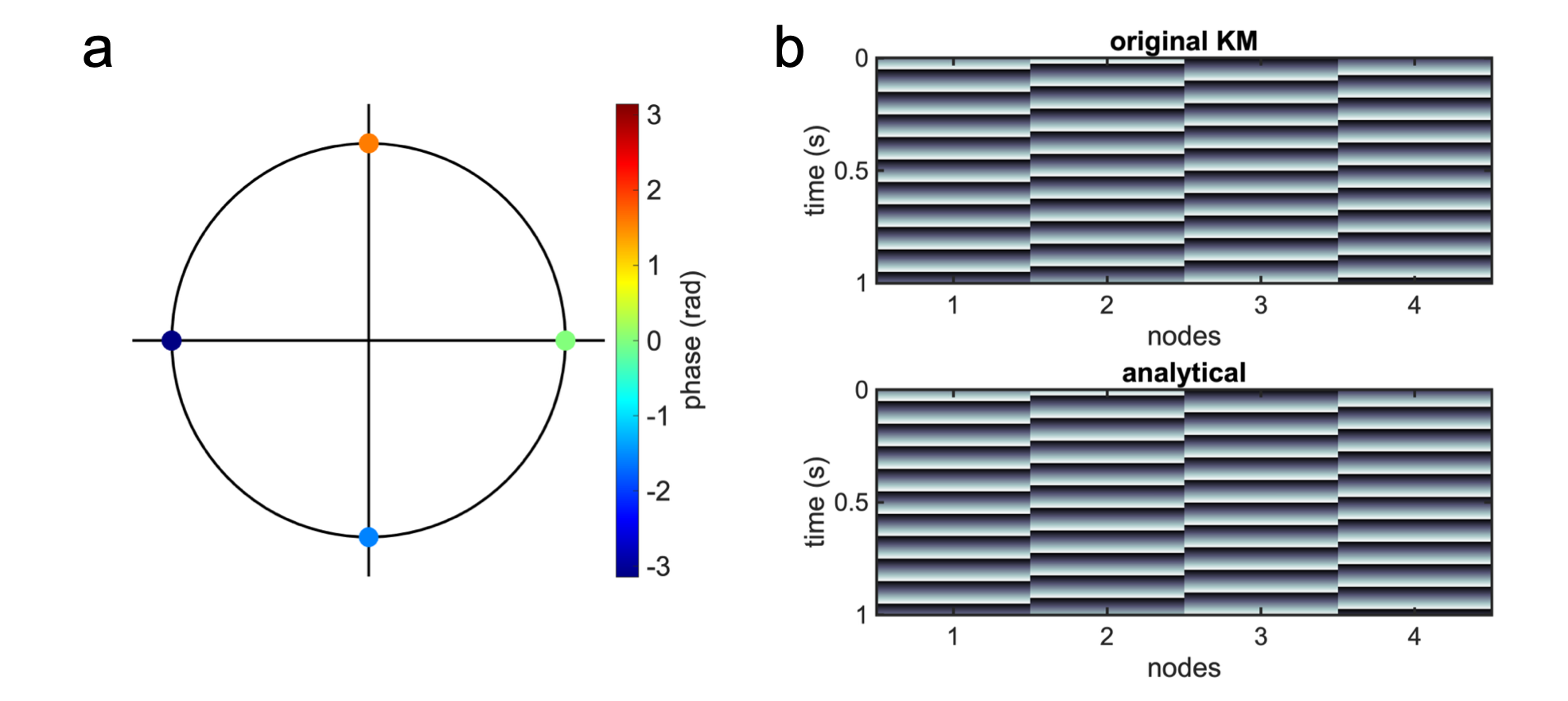}
    \caption{A network following the description of Example \ref{exp:example_1} is analyzed, where panel (a) shows an equilibrium point for this system given by $\theta_{0} = (0, \pi/2, \pi, -\pi/2)$. Panel (b) shows the dynamics of this network when the equilibrium point represented in (a) is used as initial condition.}
    \label{fig:example_1_phase_lag}
\end{figure}
\end{expl}

\section{Equilibrium points for generalized circulant networks}\label{sec:eq_points_generalized_circulant}

In this section, we discuss equilibrium points for Kuramoto networks, where the connection architecture is given by circulant matrices associated with a group $G$. Here, we use the results obtained in the Secs. \ref{sec:eq_points_circulant} and \ref{sec:eq_points_phase_lag} to explore the features of the equilibrium points. First, we recall their definitions (see \cite{kanemitsu2013matrices} for a more thorough discussion.)

\begin{definition}
Let $G$ be a finite group. A matrix $\bm{C}$ is called $G$-circulant if it has the following form 
\[ \bm{C}=(c_{\tau^{-1} \sigma})_{\tau, \sigma \in G} ,\]
where $c_{g} \in \mathbb{C}$ for $g \in G$.
\end{definition} 

\begin{rem}
When $G=\Z/n$, we recover the notion of ``circulant matrices" discussed in Section \ref{sec:eq_points_circulant}.
\end{rem} 

The following proposition is proved implicitly in \cite[Section 1.2]{kanemitsu2013matrices}.
\begin{prop} \label{prop:eigen_circulant}
Let $\chi: G \to \mathbb{C}^{\times}$ be a $1$-dimensional representation of $G$. Then $\bm{v_{\chi}}=(\chi(\sigma))_{\sigma \in G}^{T}$ is an eigenvector of any $G$-circulant matrix $\bm{C}$. The corresponding eigenvalue is 
\[ Y_{\chi}=\sum_{\sigma \in G} c_{\sigma} \chi(\sigma) \]
\end{prop}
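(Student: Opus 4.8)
## Proof Proposal

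The plan is to verify directly that $\bm{C}\bm{v_\chi} = Y_\chi \bm{v_\chi}$ by computing the $\tau$-th coordinate of the left-hand side and using the substitution $\sigma = \tau \rho$ together with the multiplicativity of the character $\chi$. First I would write out the $\tau$-th entry of $\bm{C}\bm{v_\chi}$: since $\bm{C} = (c_{\tau^{-1}\sigma})_{\tau,\sigma\in G}$ and $(\bm{v_\chi})_\sigma = \chi(\sigma)$, the $\tau$-th coordinate is
\[
(\bm{C}\bm{v_\chi})_\tau = \sum_{\sigma \in G} c_{\tau^{-1}\sigma}\,\chi(\sigma).
\]
Then I would perform the change of summation variable $\rho = \tau^{-1}\sigma$, so that $\sigma = \tau\rho$ and, as $\sigma$ ranges over $G$, so does $\rho$. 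This gives
\[
(\bm{C}\bm{v_\chi})_\tau = \sum_{\rho \in G} c_{\rho}\,\chi(\tau\rho) = \sum_{\rho \in G} c_{\rho}\,\chi(\tau)\chi(\rho) = \chi(\tau)\sum_{\rho \in G} c_{\rho}\,\chi(\rho) = Y_\chi\,\chi(\tau),
\]
where the second equality uses that $\chi$ is a $1$-dimensional representation, hence a group homomorphism $G \to \mathbb{C}^\times$, so $\chi(\tau\rho) = \chi(\tau)\chi(\rho)$. Since $\chi(\tau) = (\bm{v_\chi})_\tau$, this says exactly that $(\bm{C}\bm{v_\chi})_\tau = Y_\chi (\bm{v_\chi})_\tau$ for every $\tau \in G$, i.e.\ $\bm{C}\bm{v_\chi} = Y_\chi \bm{v_\chi}$.

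To conclude I would note that $\bm{v_\chi} \neq \bm{0}$, since $\chi(\sigma) \in \mathbb{C}^\times$ is nonzero for every $\sigma$ (indeed $|\chi(\sigma)| = 1$ as $G$ is finite), so $\bm{v_\chi}$ is a genuine eigenvector and $Y_\chi$ the associated eigenvalue. I would also emphasize that the computation made no use of any special structure of the particular $G$-circulant matrix $\bm{C}$ beyond its defining form, so $\bm{v_\chi}$ is simultaneously an eigenvector of \emph{all} $G$-circulant matrices at once — this is the point of the statement.

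There is no real obstacle here: the proof is a one-line index manipulation once the reindexing $\sigma \mapsto \tau^{-1}\sigma$ is set up correctly. The only point requiring a modicum of care is bookkeeping with the indices $\tau^{-1}\sigma$ versus $\sigma\tau^{-1}$ and checking that the substitution is a bijection of $G$ onto itself (which it is, being left translation by $\tau^{-1}$); getting the order of multiplication consistent with the convention $\bm{C}=(c_{\tau^{-1}\sigma})$ is what ensures $\chi(\tau)$ factors out cleanly. When $G = \mathbb{Z}/n$ and $\chi$ is the character $k \mapsto \omega_n^{jk}$, one recovers the classical Circulant Diagonalization Theorem and the eigenvalue formula $\lambda_j$ quoted in Section~\ref{sec:eq_points_circulant}, which serves as a useful sanity check on the index conventions.
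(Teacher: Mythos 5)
Your proof is correct and follows essentially the same argument as the paper's: compute the $\tau$-component of $\bm{C}\bm{v_\chi}$, reindex by $g=\tau^{-1}\sigma$, and use multiplicativity of $\chi$ to factor out $\chi(\tau)$. Your added remark that $\bm{v_\chi}\neq\bm{0}$ (since $\chi$ takes values in $\mathbb{C}^{\times}$) is a small but welcome point of completeness that the paper leaves implicit.
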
 

\begin{proof}
For all $\sigma \in G$, the $\tau$-component of the column vector $\bm{C} \bm{v_{\chi}}$ is given by
\[ (\bm{C} \bm{v_{\chi}})_{\tau}=\sum_{\sigma \in G} c_{\tau^{-1} \sigma} \chi(\sigma) .\] 
Letting $g=\tau^{-1} \sigma$, we have $\sigma=\tau g$. Hence
\begin{align*}
(\bm{C} \bm{v_{\chi}})_{\tau} &=\sum_{g \in G} c_{g} \chi(\tau g)\\
                 &=\sum_{g \in G} c_{g} \chi(g) \chi(\tau)\\
                 &=\chi(\tau) \sum_{g \in G} c_{g} \chi(g)\\
                 &=Y_{\chi} \chi(\tau)=Y_{\chi}(\bm{v}_{\chi})_{\tau}.
\end{align*}
Here we use the fact that $\chi(\tau g)=\chi(\tau) \chi(g)$ as $\chi$ is a $1$-dimensional representation of $G$. Since this true for all $\tau \in G$, we conclude that $\bm{v_{\chi}}$ is an eigenvector of $\bm{C}$ associated with the eigenvalue $Y_{\chi}$.
\end{proof}

Let $n=|G|$. Then by Lagrange's theorem, $\sigma^n=1$ for all $\sigma \in G$. Therefore 
\[ \chi(\sigma)^n=\chi(\sigma^n)=\chi(1)=1 .\] 
We conclude that $\chi(\sigma)$ must be an $n$-th root of unity. Therefore, we can define 
\[ \arg(\bm{v_{\chi}})=(\arg(\chi(\sigma))_{\sigma \in G}  \in \frac{2 \pi}{n}{\Z}^n. \]

By Proposition \ref{prop:twisted_stated_phase_lag_KM}, and Proposition \ref{prop:twisted_stated_phase_lag}, we have the following.

\begin{prop} \label{prop:generalized_equilibrium}
Suppose the topological connection of oscillators is given by a $G$-circulant matrix $\bm{A}=(a_{ij})$. Let $\chi, \bm{v_{\chi}}, Y_{\chi}, \arg(\bm{v_{\chi}})$ be as above and $\phi=\arg(Y_{\chi})$. Then 
\begin{enumerate}
\item $\arg(\bm{v_{\chi}})$ is an equilibrium point of the original phase-lag KM 

\[    \frac{d\theta_{i}}{dt} = \epsilon \sum\limits_{j=1}^{N} a_{ij} \sin{(\theta_{j} - \theta_{i} - \phi)} .\]
\item $\arg(\bm{v_{\chi}})$ is an equilibrium point of the associated phase-lag complex-valued model

\[ \frac{d \theta_i}{dt}=\epsilon \sum_{j=1}^N a_{ij} \left(\sin(\theta_j-\theta_i-\phi)-\i \cos(\theta_j-\theta_i-\phi) \right). \] 
\end{enumerate}
\end{prop}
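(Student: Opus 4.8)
The plan is to recognize this statement as an immediate corollary of Proposition \ref{prop:eigen_circulant} together with Propositions \ref{prop:twisted_stated_phase_lag} and \ref{prop:twisted_stated_phase_lag_KM}; no new idea is needed, only a careful matching of hypotheses. First I would invoke Proposition \ref{prop:eigen_circulant}: since $\bm{A}$ is assumed to be $G$-circulant, the vector $\bm{v_{\chi}}=(\chi(\sigma))_{\sigma\in G}^{T}$ is an eigenvector of $\bm{A}$ with eigenvalue $Y_{\chi}=\sum_{\sigma\in G}c_{\sigma}\chi(\sigma)$.

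Second, I would observe that $\bm{v_{\chi}}$ is genuinely of the form $e^{\i\bm{\theta}_0}$ for a real phase vector $\bm{\theta}_0$. This uses exactly the remark preceding the statement: by Lagrange's theorem $\sigma^{n}=1$ for all $\sigma\in G$, so $\chi(\sigma)^{n}=\chi(1)=1$ and each $\chi(\sigma)$ is an $n$-th root of unity, in particular of modulus $1$. Hence componentwise $\chi(\sigma)=e^{\i\arg(\chi(\sigma))}$, so $\bm{v_{\chi}}=e^{\i\arg(\bm{v_{\chi}})}$ with $\arg(\bm{v_{\chi}})\in\frac{2\pi}{n}\Z^{n}$ a bona fide element of the phase space (up to the usual reduction into $[-\pi,\pi]^{N}$).

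Third, I would verify the crucial hypothesis $Y_{\chi}e^{-\i\phi}\in\R$ for the choice $\phi=\arg(Y_{\chi})$: writing $Y_{\chi}=|Y_{\chi}|e^{\i\arg(Y_{\chi})}$ yields $Y_{\chi}e^{-\i\phi}=|Y_{\chi}|\in\R$. Thus, with $\bm{x}_0=\bm{v_{\chi}}=e^{\i\arg(\bm{v_{\chi}})}$ an eigenvector of $\bm{A}$ associated with the eigenvalue $\lambda=Y_{\chi}$ satisfying $\lambda e^{-\i\phi}\in\R$, the hypotheses of Proposition \ref{prop:twisted_stated_phase_lag} and Proposition \ref{prop:twisted_stated_phase_lag_KM} are met verbatim. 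Applying the former gives that $\arg(\bm{v_{\chi}})$ is an equilibrium point of the analytical phase-lag KM, and applying the latter gives that it is an equilibrium point of the original phase-lag KM, which is the assertion.

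I do not expect a real obstacle here, as the argument is pure bookkeeping. The single point that warrants a sentence of care is the degenerate case $Y_{\chi}=0$, in which $\arg(Y_{\chi})$ is not well defined; there one simply fixes an arbitrary value, say $\phi=0$, and the condition $\lambda e^{-\i\phi}=0\in\R$ still holds trivially, so both propositions apply unchanged.
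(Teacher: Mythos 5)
Your proposal is correct and follows exactly the route the paper takes: apply Proposition \ref{prop:eigen_circulant} to get $\bm{A}\bm{v_{\chi}}=Y_{\chi}\bm{v_{\chi}}$, note that the entries of $\bm{v_{\chi}}$ are roots of unity so $\bm{v_{\chi}}=e^{\i\arg(\bm{v_{\chi}})}$, check $Y_{\chi}e^{-\i\phi}=|Y_{\chi}|\in\R$ for $\phi=\arg(Y_{\chi})$, and then invoke Propositions \ref{prop:twisted_stated_phase_lag} and \ref{prop:twisted_stated_phase_lag_KM}. Your remark on the degenerate case $Y_{\chi}=0$ is a small point of care that the paper leaves implicit, and handling it as you do is fine.
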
 

\section{Equilibrium points for multilayer networks}\label{sec:eq_points_non_circulant}

In this section, we construct some examples of non-circulant network that have interesting  equilibrium points. We do so by applying a recent result on the join of two circulant graphs \cite{djoan2022joins}. First, we introduce the following convention. In the following, the operator $\conc$ denotes vector concatenation:
\[
(x_1,\dots,x_m)^T \conc (y_1,\dots,y_n)^T = (x_1,\dots,x_m, y_1,\dots,y_n)^T
\]
We also denote by $\omega_n= e^{\frac{ 2 \pi \i}{n}}$, a fixed primitive $n$-root of unity in $\mathbb{C}.$

We start with the following observation (see also \cite[Section 2]{djoan2022joins}). 

\begin{prop} \label{prop:block_circulant}
Let $\bm{A}$ be an $(k_1+k_2) \times (k_1+k_2)$ matrix of the form 
\[ \bm{A}= \begin{pmatrix}
\bm{C} & [\alpha]_{k_1, k_2} \\ 
[\beta]_{k_2, k_1} & \bm{D}
\end{pmatrix}. \]

Here $\bm{C}=circ(c_0,\dots,c_{k_1})$ and $\bm{D}=circ(d_0,\dots,d_{k_2})$ are circulant matrices of respective sizes $k_1 \times k_1$ and $k_2 \times k_2$, $[\alpha]_{k_1, k_2}$ is a $k_1 \times k_2$ matrix whose entries are equal to $\alpha\in\R$, and $[\beta]_{k_2, k_1}$ is a $k_2 \times k_1$ matrix whose entries are equal to $\beta\in\R$. For $1 \leq j \leq k_1-1$ let 
\[ \bm{w}_j= (1, \omega_{k_1}^j, \omega_{k_1}^{2j}, \ldots, \omega_{k_1}^{(k_1-1)j}, 0 \ldots, 0 )^T = \bm{v}_{j,k_1} \conc \underbrace{(0, 0, \ldots, 0)^T}_{\text{$k_2$ zeros}} \] 
with  
\[ \bm{v}_{j,k_1}=(1, \omega_{k_1}^j, \omega_{k_1}^{2j}, \ldots, \omega_{k_1}^{(k_1-1)j})^T. \] 

For $1 \leq j \leq k_2-1$, let 
\[ \bm{z}_j= (0 \ldots, 0  ,1, \omega_{k_2}^j, \omega_{k_2}^{2j}, \ldots, \omega_{k_2}^{(k_2-1)j})^T= \underbrace{(0, 0, \ldots, 0)^T}_{\text{$k_1$ zeros}} * \bm{v}_{j,k_2}  , \]
with 
\[ \bm{v}_{j,k_2}=  (1, \omega_{k_2}^j, \omega_{k_2}^{2j}, \ldots, \omega_{k_2}^{(k_2-1)j})^T. \] 
We have the following 
\begin{enumerate}
\item $\bm{w}_j$ is an eigenvector of $\bm{A}$ associated with the eigenvalue 
\[ \lambda _{j}^{\bm{C}}=c_{0}+c_{k_1-1}\omega _{k_1}^{j}+c_{k_1-2}\omega_{k_1} ^{2j}+\dots +c_{1}\omega_{k_1} ^{(k_1-1)j} \] 

\item Similarly, 
 $\bm{z}_j$ is an eigenvector associated with the eigenvalue 
\[ \lambda _{j}^{\bm{D}}=c_{0}+d_{k_2-1}\omega _{k_2}^{j}+d_{k_2-2}\omega_{k_2} ^{2j}+\dots +d_{1}\omega_{k_2} ^{(k_2-1)j} \] 
\end{enumerate}
\end{prop}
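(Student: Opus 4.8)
The plan is to verify directly that $\bm{A}\bm{w}_j = \lambda_j^{\bm{C}} \bm{w}_j$ and $\bm{A}\bm{z}_j = \lambda_j^{\bm{D}}\bm{z}_j$ by block matrix multiplication, exploiting the vanishing pattern of the zero blocks. Writing $\bm{w}_j = \bm{v}_{j,k_1}\conc \bm{0}_{k_2}$, the product $\bm{A}\bm{w}_j$ has top block $\bm{C}\bm{v}_{j,k_1} + [\alpha]_{k_1,k_2}\bm{0}_{k_2}$ and bottom block $[\beta]_{k_2,k_1}\bm{v}_{j,k_1} + \bm{D}\bm{0}_{k_2}$. The second summand in each block is zero, so the top block is simply $\bm{C}\bm{v}_{j,k_1}$ and the bottom block is $[\beta]_{k_2,k_1}\bm{v}_{j,k_1}$.

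Next I would handle these two pieces. For the top block, the Circulant Diagonalization Theorem (as recalled in Section \ref{sec:eq_points_circulant}, or equivalently Proposition \ref{prop:eigen_circulant} applied to $G = \Z/k_1$) gives $\bm{C}\bm{v}_{j,k_1} = \lambda_j^{\bm{C}}\bm{v}_{j,k_1}$ with $\lambda_j^{\bm{C}} = c_0 + c_{k_1-1}\omega_{k_1}^j + \dots + c_1\omega_{k_1}^{(k_1-1)j}$, exactly the claimed eigenvalue. For the bottom block, each entry of $[\beta]_{k_2,k_1}\bm{v}_{j,k_1}$ equals $\beta\sum_{\ell=0}^{k_1-1}\omega_{k_1}^{\ell j}$. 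Since $1 \leq j \leq k_1-1$, we have $\omega_{k_1}^j \neq 1$, so this geometric sum vanishes: $\sum_{\ell=0}^{k_1-1}\omega_{k_1}^{\ell j} = \frac{\omega_{k_1}^{k_1 j}-1}{\omega_{k_1}^j - 1} = 0$. Hence the bottom block of $\bm{A}\bm{w}_j$ is the zero vector, which agrees with $\lambda_j^{\bm{C}}$ times the (zero) bottom block of $\bm{w}_j$. Combining, $\bm{A}\bm{w}_j = \lambda_j^{\bm{C}}\bm{w}_j$. The argument for $\bm{z}_j$ is entirely symmetric: the bottom block gives $\bm{D}\bm{v}_{j,k_2} = \lambda_j^{\bm{D}}\bm{v}_{j,k_2}$ and the top block gives $\alpha\sum_{\ell=0}^{k_2-1}\omega_{k_2}^{\ell j} = 0$ because $1 \leq j \leq k_2-1$ forces $\omega_{k_2}^j \neq 1$.

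There is really no serious obstacle here; the only thing to be careful about is the index ranges. The restriction $1 \leq j \leq k_1 - 1$ (rather than $0 \leq j \leq k_1-1$) is exactly what guarantees the off-diagonal geometric sums vanish, which is why the $\bm{w}_j$ remain eigenvectors of the full matrix $\bm{A}$ and not merely of the block $\bm{C}$; this should be flagged explicitly. One should also note the harmless typo-level point that the stated formula for $\lambda_j^{\bm{D}}$ in item (2) has leading term $c_0$ where it should read $d_0$, consistent with the circulant $\bm{D} = circ(d_0,\dots,d_{k_2})$; the proof makes clear the intended eigenvalue is $d_0 + d_{k_2-1}\omega_{k_2}^j + \dots + d_1\omega_{k_2}^{(k_2-1)j}$. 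The whole proof is then a two-paragraph block computation plus the geometric-sum observation, and it can cite the Circulant Diagonalization Theorem or Proposition \ref{prop:eigen_circulant} rather than re-deriving the circulant eigenpairs.
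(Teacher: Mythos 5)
Your proof is correct and follows essentially the same route as the paper: apply the Circulant Diagonalization Theorem to the diagonal block and observe that the off-diagonal constant block produces entries proportional to $\sum_{\ell=0}^{k_1-1}\omega_{k_1}^{\ell j}$, which vanish for $1 \leq j \leq k_1-1$. Your explicit geometric-sum justification and the note that the $c_0$ in the formula for $\lambda_j^{\bm{D}}$ should read $d_0$ are both correct refinements of the paper's argument, which leaves these points implicit.
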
 

\begin{proof}
By the Circulant Diagonalization Theorem, $\bm{v}_{j,k_1}$ is an eigenvector of $\bm{C}$ with respect to the eigenvalue $\lambda_{C}^{j}$. 
By definition, we have 
\[ \bm{A} \bm{w}_j= \bm{C} \bm{v}_{j,k_1} \conc  \underbrace{(t_{j} , t_{j}, \ldots, t_{j})^T}_{\text{$n-k$ terms}}=\lambda_j^{\bm{C}} \bm{v}_{j,k_1}   \conc \underbrace{(t_{j} , t_{j}, \ldots, t_{j})^T}_{\text{$n-k$ terms}}\] 
Here 
\[ t_j =\beta \sum_{i=0}^{k-1} \omega_{k}^{i j} .\]
By the assumption $1 \leq j \leq k_1-1$, we can see that $t_j=0$. Therefore, we conclude that 
\[ \bm{A} \bm{w}_j=\lambda_j^{\bm{C}} \bm{w}_j .\] 
This proves the first statement. The second statement can be proved by the same argument.
\end{proof}

Let us consider a special case when $\bm{C} = \bm{D}$ and $\bm{C}$ is a symmetric circulant matrix (so in particular, $k_1=k_2=k$). In this case, $\bm{w}_j$ and $\bm{z}_j$ are both eigenvectors with respect to the same eigenvalue $\lambda_{j}^{\bm{C}}$. Note that by assumption $\bm{C}$ is symmetric, so $\lambda_{\bm{C}}^{j} \in \R$.  Furthermore, for any $\phi\in[0,2\pi)$, $\bm{w}_j + e^{\i\phi} \bm{z}_j$ is an eigenvector with respect to $\lambda_{\bm{C}}^{j}$. We observe that
\begin{align*} 
\bm{w}_j + e^{\i\phi} \bm{z}_j &=\bm{v}_{j,k} * e^{\i\phi}\bm{v}_{j,k} \\  &= (1, \omega_{k}^j, \omega_{k}^{2j}, \ldots, \omega_{k}^{(k-1)j}, e^{\i\phi}, e^{\i\phi}\omega_{k}^j, e^{\i\phi} \omega_{k}^{2j}, \ldots, e^{\i\phi}\omega_{k}^{(k-1)j})^T \\ 
&=(e^{\i \theta_1}, e^{\i \theta_2}, \ldots, e^{\i \theta_{k}}, e^{\i \theta_{k+1}}, e^{\i \theta_{k+2}}, \ldots, e^{\i \theta_{2k}})^T,
\end{align*}
where 
\begin{align*}
\bm{\theta}_{0}^{(j)} &=(\theta_1, \theta_2, \ldots, \theta_{k}, \theta_{k+1}, \theta_{k+2}, \ldots, \theta_{2k})^T\\ 
&=\left(0, \frac{2 \pi j}{k}, \ldots, \frac{2 \pi(k-1)j}{k}, \phi, \frac{2 \pi j}{k}+\phi, \ldots, \frac{2 \pi(k-1)j}{k}+\phi \right)^T. 
\end{align*}
By this argument, Proposition \ref{prop:twisted_state_KM}, and Proposition \ref{prop:twisted_state}, we have the following.

\begin{prop} \label{prop:example_non_circulant}

Let $\bm{C}$ be a symmetric $k\times k$ circulant matrix, and $\alpha, \beta$ be two arbitrary real numbers. Consider the following matrix 
\[ \bm{A}= \begin{pmatrix}
\bm{C} & [\alpha]_{k, k} \\ 
[\beta]_{k,k} & \bm{C}
\end{pmatrix}. \]

For each $1 \leq j \leq k-1$, and for any $\phi\in[0,2\pi)$, let 
\[ \bm{\theta}_{0}^{(j,\phi)}= \left(0, \frac{2 \pi j}{k}, \ldots, \frac{2 \pi(k-1)j}{k}, \phi, \frac{2 \pi j}{k}+\phi, \ldots, \frac{2 \pi(k-1)j}{k}+\phi \right)^T .\] 
\begin{enumerate}
\item $\bm{\theta}_{0}^{(j,\phi)}$ is an equilibrium point of the complex-valued model. 
\item $\bm{\theta}_{0}^{(j, \phi)}$ is also an equilibrium point of the original KM. 
\end{enumerate}
\end{prop}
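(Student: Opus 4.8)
The plan is to reduce Proposition~\ref{prop:example_non_circulant} to the already-established machinery by exhibiting, for each $j$ and $\phi$, an explicit eigenvector of $\bm{A}$ whose argument is exactly $\bm{\theta}_{0}^{(j,\phi)}$. All the real work has in fact been done in the paragraph preceding the statement; the task is to assemble it cleanly.

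First I would invoke Proposition~\ref{prop:block_circulant} with $\bm{C}=\bm{D}$ a symmetric $k\times k$ circulant matrix: this gives that $\bm{w}_j$ and $\bm{z}_j$ are both eigenvectors of $\bm{A}$ associated with the \emph{same} eigenvalue $\lambda_{j}^{\bm{C}}$, which is real because $\bm{C}$ is symmetric. Next, since the eigenspace of $\bm{A}$ for $\lambda_{j}^{\bm{C}}$ is a linear subspace, any linear combination $\bm{w}_j+e^{\i\phi}\bm{z}_j$ is again an eigenvector of $\bm{A}$ for $\lambda_{j}^{\bm{C}}$. Then I would simply record the componentwise identity
\[
\bm{w}_j+e^{\i\phi}\bm{z}_j=\left(1,\omega_k^j,\dots,\omega_k^{(k-1)j},e^{\i\phi},e^{\i\phi}\omega_k^j,\dots,e^{\i\phi}\omega_k^{(k-1)j}\right)^T=e^{\i\bm{\theta}_{0}^{(j,\phi)}},
\]
using $\omega_k^{mj}=e^{\i\cdot 2\pi mj/k}$, so that the argument of this eigenvector is precisely $\bm{\theta}_{0}^{(j,\phi)}$ (modulo the usual reduction to $[-\pi,\pi]$, which does not affect equilibrium status).

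Finally I would apply Proposition~\ref{prop:twisted_state} to conclude that $\bm{\theta}_{0}^{(j,\phi)}$ is an equilibrium point of the analytical KM, and Proposition~\ref{prop:twisted_state_KM} to conclude that it is an equilibrium point of the original KM; both propositions apply since $\lambda_{j}^{\bm{C}}\in\R$ and $e^{\i\bm{\theta}_{0}^{(j,\phi)}}$ is an eigenvector of $\bm{A}$ for that real eigenvalue. (One could equally cite the phase-lag generalizations with $\phi=0$, but the $\phi=0$ propositions suffice.)

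Honestly, there is no serious obstacle here: the proposition is essentially a corollary, and the only thing requiring care is the bookkeeping of indices — matching the blocks of $\bm{w}_j+e^{\i\phi}\bm{z}_j$ against the two halves of $\bm{\theta}_{0}^{(j,\phi)}$ and confirming that the zero first coordinate and the constraint $1\le j\le k-1$ (needed in Proposition~\ref{prop:block_circulant} so that the ``spillover'' terms $t_j=\beta\sum_i\omega_k^{ij}$ vanish) are consistent with the statement. I would also remark, as the paper does in Remark~\ref{rem:theta_1}, that the freedom to add $\phi$ to the second block is exactly the freedom to rigidly rotate the oscillators in the second layer, which is what makes these genuinely new (non-circulant) equilibria rather than rescalings of twisted states.
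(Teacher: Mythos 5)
Your proposal is correct and follows essentially the same route as the paper: apply Proposition~\ref{prop:block_circulant} with $\bm{C}=\bm{D}$ symmetric so that $\bm{w}_j$ and $\bm{z}_j$ share the real eigenvalue $\lambda_j^{\bm{C}}$, note that $\bm{w}_j+e^{\i\phi}\bm{z}_j=e^{\i\bm{\theta}_0^{(j,\phi)}}$ is again an eigenvector, and conclude via Propositions~\ref{prop:twisted_state} and~\ref{prop:twisted_state_KM}. No discrepancies worth noting.
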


\begin{rem}
Proposition \ref{prop:example_non_circulant} can be generalized to the case where we join $d$ identical circulant networks. We refer interested readers to \cite[Section 5]{djoan2022joins} for further details. 
\end{rem} 

Based on the results depicted in this section, we perform computational analyses for the example demonstrated in proposition \ref{prop:example_non_circulant}. Then, Fig. \ref{fig:eq_points_non_circulant}(a) shows the graphic representation of an equilibrium point for a network with $N = 50$ nodes following $\bm{\theta}_0$ with $p = 1$. Furthermore, the phases of the first $N/2$ nodes is represented in bigger circle, while the phases of the last $N/2$ nodes is given by the smaller ones. Here, the network is described by a non-circulant graph represented by matrix $\bm{A}$, where $\bm{C}$ follows a ring network with $k = 5$, $\alpha = 0.25$ and $\beta = 0.75$. A graphic representation of this non-circulant matrix is given by Fig. \ref{fig:eq_points_non_circulant}b. 
\begin{figure}[htb]
    \centering
    \includegraphics[width=0.75\textwidth]{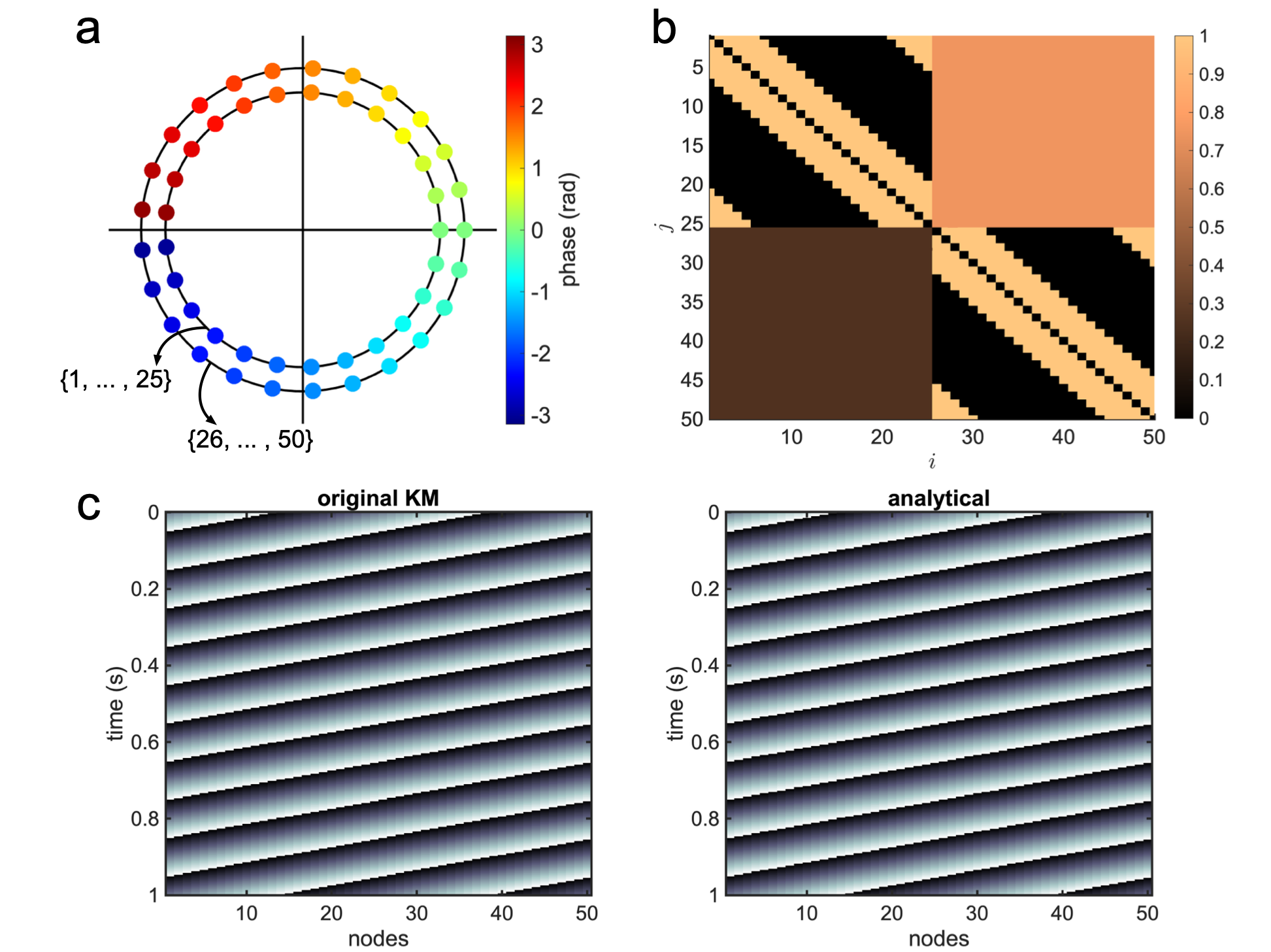}
    \caption{An equilibrium point for a non-circulant network with $N = 50$ is graphically represented in (a), where the phase of each node is depicted in color-code. Here, the graph is given by the matrix represented in (b), which is composed by circulant matrices. The spatiotemporal dynamics for this system when the equilibrium point (a) is used as initial condition is represented in panel (c) for both the original KM and the complex-valued model (analytical). The matrix and the equilibrium point follow the example in proposition \ref{prop:example_non_circulant}.}
    \label{fig:eq_points_non_circulant}
\end{figure}

Moreover, Fig. \ref{fig:eq_points_non_circulant}c represents the spatiotemporal patterns for both the original KM and the complex-valued model (analytical). Here, the analyses use the equilibrium point represented in Fig. \ref{fig:eq_points_non_circulant}a as initial condition, so we can observe the wave pattern as time evolves.

\section{Equilibrium points for random networks}\label{sec:eq_points_random_networks}

Our analytical approach allows us to ``design'' a twisted state (equilibrium point) on an undirected random network, given by a Erd\H{o}s-R\'{e}yni graph with $N = 100$, and $p = 0.25$ (Fig. \ref{fig:random_network_twisted_state}a). We first evaluate the eigenspectrum of this random matrix numerically, and then decompose the matrix using its eigenvectors and eigenvalues ($A = VDV'$). Eigenvalues were arranged in ascending order by their real part. We then modified the $2^{\mathrm{nd}}$ and $3^{\mathrm{rd}}$ last eigenvectors by applying the sine and cosine functions, respectively, to the phase given by Eq.~(\ref{eq:twisted_state_eq}). Finally, we set the eigenvalues associated with these eigenvectors to be equal and scaled appropriately. Then, using $V D V'$, we create a modified matrix $\bm{A}'$ (Fig. \ref{fig:random_network_twisted_state}b), which is used as the adjacency matrix for the simulations with the original KM. With this modification $\cos(\bm{\theta}_0)$ and $\sin(\bm{\theta}_0)$ are eigenvectors of the new weighted adjacency matrix associated to the same real eigenvalue $\lambda$. Consequently $e^{\i \bm{\theta}_0}=\cos(\bm{\theta}_0)+\i \sin(\bm{\theta}_0)$ is an eigenvector of the new matrix $\bm{A}'$ associated with the eigenvalue $\lambda$. By Proposition \ref{prop:twisted_state_KM}, we know that $\bm{\theta}_0$ is an equilibrium point of the new system associated with $\bm{A}'$. Finally, it is important to note that these results were for these two eigenvalues scaled by approximately one order of magnitude, which demonstrates that this result does not depend on the modified eigenvalue-eigenvector pairs dominating the resulting system dynamics.
\begin{figure}[htb]
    \centering
    \includegraphics[width=0.75\textwidth]{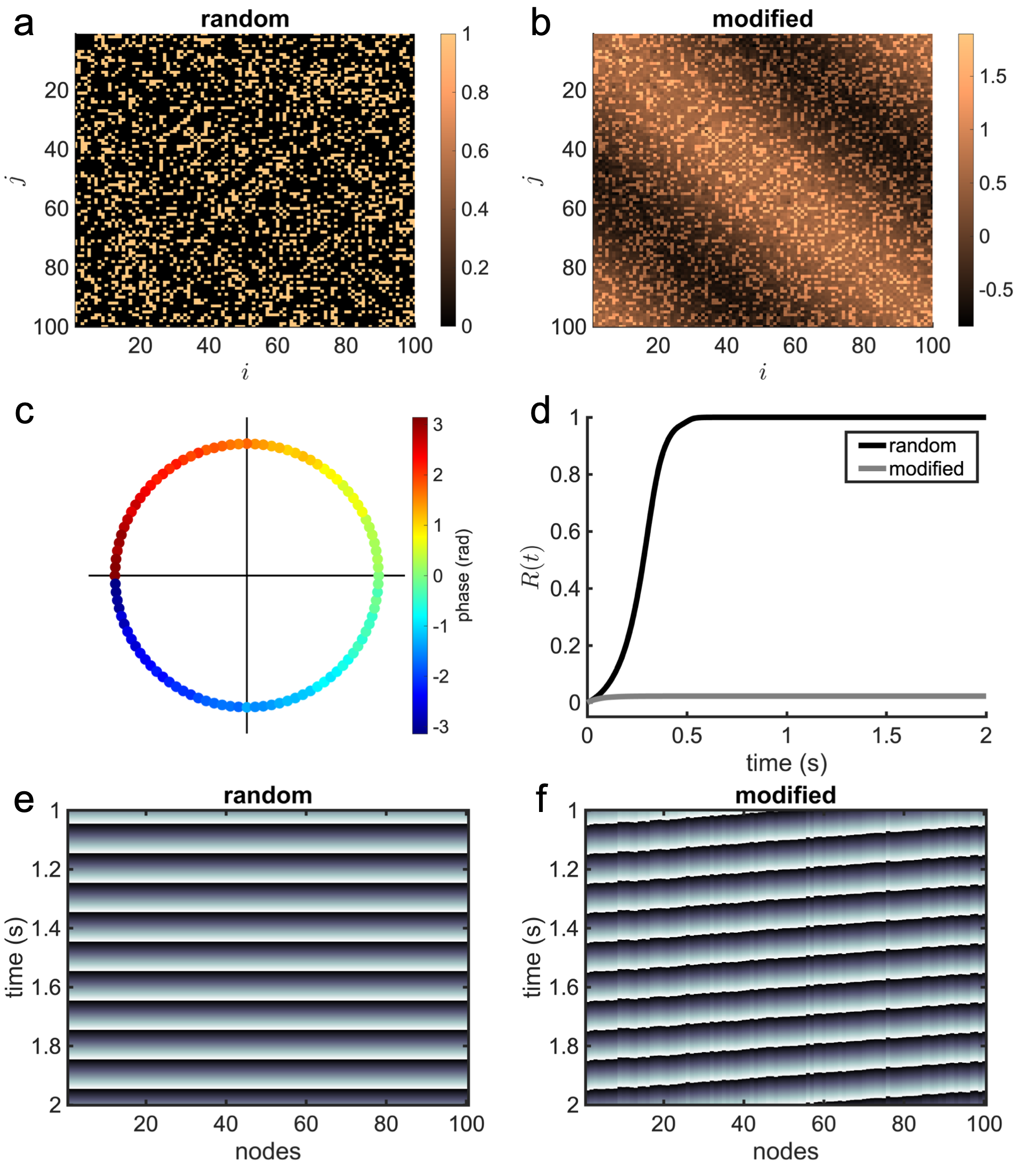}
    \caption{We modify a random (Erd\H{o}s-Renyi) matrix in order to produce an equilibrium point given by a twisted state. Here, the original matrix (a) is modified (b) due to changes in some of the eigenvectors. The equilibrium point (c) is then used as initial condition to the simulation. The Kuramoto order parameter ($R(t)$) as a function of time shows that for the random matrix, the system reaches a phase synchronized state, while for the modified matrix, the system stays in a wave (``twisted") state. The spatiotemporal patterns corroborate these features (e, and f).}
    \label{fig:random_network_twisted_state}
\end{figure}

The equilibrium point for these systems is then represented in Fig. \ref{fig:random_network_twisted_state}c, which is given by Eq. (\ref{eq:twisted_state_eq}). Using this phase configuration as initial condition for the simulation leads the systems to different states: in the case of the random matrix, the system reaches a phase synchronized state ($R = 1$); in the case of the modified matrix, the system stays in a twisted state, which is a phase-locked but not phase synchronized state ($R = 0$) -- see Fig. \ref{fig:random_network_twisted_state}d. These features can be observed in the spatiotemporal dynamics of these networks, which are depicted in Figs. \ref{fig:random_network_twisted_state}e and f, respectively.

\section{Discussion and conclusions}\label{sec:discussions_conclusions}

In this paper, we have analyzed equilibria in Kuramoto systems. To do so, we have used a complex-valued version of the Kuramoto model \cite{muller2021algebraic}, which allows us to further investigate equilibria in the original nonlinear Kuramoto model. In this context, we have shown that some of the eigenvectors of the adjacency matrix are equilibrium points for the original Kuramoto model and also for the complex-valued model. These results thus indicate there is a strong correspondence between the original Kuramoto model, which is given by nonlinear differential equations, and the complex-valued one, which admits an exact analytical solution for individual realizations of the system and on finite graphs \cite{budzinski2022geometry}. Using this approach, we are able to extend the analysis of equilibria in Kuramoto networks to new conditions not previously considered.

Based on the general result presented in Sec. \ref{sec:eq_points_kuramoto}, where we have shown that we can find equilibrium points by using the eigenvectors of the adjacency matrix for Kuramoto systems, we first considered equilibria for the well-known case of circulant networks. We then moved on to study the case of global, all-to-all connections (complete graph on $N$ nodes), which is the case first studied by Kuramoto, where we have completely characterized the equilibrium points.

We then have analyzed equilibrium points for phase-lag oscillators, where the imaginary part of the eigenvalues becomes important. This result allows us to understand equilibria in Kuramoto system with an interplay between attractive and repulsive coupling, due to the phase-lag parameter. We also have analyzed the case of generalized circulant graphs, where the eigenvectors and eigenvalues can be also imaginary, therefore affecting the equilibria of this kind of system.

Furthermore, we have studied equilibria in the case of multilayer networks, where the system can be analyzed as the join of circulant networks. Based on \cite{djoan2022joins}, we have found equilibrium points for this kind of system, which opens the possibility of application in several systems for the study of spreading dynamics, neuroscience, synchronization, technical system, and others \cite{boccaletti2014structure, bassett2017network, de2016physics, kivela2014multilayer}.

Finally, we have used the ideas developed in this paper to ``design'' an equilibrium point in a random network. In this case, we have shown a procedure to change the adjacency matrix in order to create new equilibria in the system. This shows the utility of our analytical approach and extends the study of equilibria to a class of networks beyond the circulant graphs considered previously. Based on this result, future work can extend these ideas to the controlling Kuramoto systems, demonstrating the utility of analyzing the original, nonlinear Kuramoto model through the lens of the complex-valued approach \cite{muller2021algebraic, budzinski2022geometry}.

Throughout this paper, we have shown a novel approach to investigate equilibria in Kuramoto networks. Based on our complex-valued model for Kuramoto oscillators, we can now study, analytically, network of Kuramoto oscillators under new and more varied conditions. Our study reveals a strong correspondence with the original, nonlinear, Kuramoto model, where we can now track, analytically, individual realizations of oscillator networks and push further investigations of the rich dynamics that this kind of system offers.

\section*{Appendix - Computational analyses}\label{sec:numerical_simulations}

The solution for the original Kuramoto model (KM) is given by the numerical integration of Eq. (\ref{eq:KM}), where we use Euler's method with time step of $10^{-4}$. On the other hand, the analytical solution is obtained through the evaluating of Eq. (\ref{eq:solution_analytical_model}) using a mathematical tool called Expokit \cite{sidje1998expokit}, which is designed to solve exponential matrix equations. Here, we used a windowed approach to the propagation of the solution, where the final solution for each window as the initial condition for the subsequent one. A detailed explanation on this point can be found in \cite{budzinski2022geometry}.

\section*{Acknowledgments}
This work was supported by BrainsCAN at Western University through the Canada First Research Excellence Fund (CFREF), the NSF through a NeuroNex award (\#2015276), the Natural Sciences and Engineering Research Council of Canada (NSERC) grant R0370A01, and by the Western Academy for Advanced Research. J.M.~gratefully acknowledges the Western University Faculty of Science Distinguished Professorship in 2020-2021. R.C.B gratefully acknowledges the Western Institute for Neuroscience Clinical Research Postdoctoral Fellowship.


\begin{thebibliography}{37}

\bibitem{Kuramoto2012chemical}
Y.~Kuramoto.
\newblock {\em Chemical oscillations, waves, and turbulence}.
\newblock Springer Science \& Business Media, 2012.

\bibitem{acebron2005kuramoto}
J.~A. Acebr{\'o}n, L.~L. Bonilla, C.~J.~P. Vicente, F.~Ritort, and R.~Spigler.
\newblock The kuramoto model: A simple paradigm for synchronization phenomena.
\newblock {\em Reviews of Modern Physics}, 77(1):137, 2005.

\bibitem{abrams2004chimera}
D.~M. Abrams and S.~H. Strogatz.
\newblock Chimera states for coupled oscillators.
\newblock {\em Physical Review Letters}, 93(17):174102, 2004.

\bibitem{arenas2008synchronization}
A.~Arenas, A.~Diaz-Guilera, J.~Kurths, Y.~Moreno, and C.~Zhou.
\newblock Synchronization in complex networks.
\newblock {\em Physics Reports}, 469(3):93--153, 2008.

\bibitem{boccaletti_2002}
S.~Boccaletti, J.~Kurths, G.~Osipov, D.~L. Valladares, and C.~S. Zhou.
\newblock The synchronization of chaotic systems.
\newblock {\em Physics Reports}, 366(1-2):1--101, 2002.

\bibitem{parastesh2020chimeras}
F.~Parastesh, S.~Jafari, H.~Azarnoush, Z.~Shahriari, Z.~Wang, S.~Boccaletti,
  and M.~Perc.
\newblock Chimeras.
\newblock {\em Physics Reports}, 2020.

\bibitem{rodrigues2016kuramoto}
F.~A. Rodrigues, T.~K. D.~M. Peron, P.~Ji, and J.~Kurths.
\newblock The kuramoto model in complex networks.
\newblock {\em Physics Reports}, 610:1--98, 2016.

\bibitem{strogatz2000kuramoto}
S.~H. Strogatz.
\newblock From kuramoto to crawford: exploring the onset of synchronization in
  populations of coupled oscillators.
\newblock {\em Physica D: Nonlinear Phenomena}, 143(1-4):1--20, 2000.

\bibitem{xu2018origin}
C.~Xu, S.~Boccaletti, S.~Guan, and Z.~Zheng.
\newblock Origin of bellerophon states in globally coupled phase oscillators.
\newblock {\em Physical Review E}, 98(5):050202, 2018.

\bibitem{Strogatz2001}
S.~H. Strogatz.
\newblock Exploring complex networks.
\newblock {\em Nature}, 410(6825):268--276, 2001.

\bibitem{bick2020understanding}
C.~Bick, M.~Goodfellow, C.~R. Laing, and E.~A. Martens.
\newblock Understanding the dynamics of biological and neural oscillator
  networks through exact mean-field reductions: a review.
\newblock {\em The Journal of Mathematical Neuroscience}, 10:1--43, 2020.

\bibitem{chen2018counting}
T.~Chen, R.~Davis, and D.~Mehta.
\newblock Counting equilibria of the kuramoto model using birationally
  invariant intersection index.
\newblock {\em SIAM Journal on Applied Algebra and Geometry}, 2(4):489--507,
  2018.

\bibitem{xin2016analytical}
X.~Xin, T.~Kikkawa, and Y.~Liu.
\newblock Analytical solutions of equilibrium points of the standard kuramoto
  model: 3 and 4 oscillators.
\newblock In {\em 2016 American Control Conference (ACC)}, pages 2447--2452.
  IEEE, 2016.

\bibitem{chen2019three}
T.~Chen, J.~Mare{\v{c}}ek, D.~Mehta, and M.~Niemerg.
\newblock Three formulations of the kuramoto model as a system of polynomial
  equations.
\newblock In {\em 2019 57th Annual Allerton Conference on Communication,
  Control, and Computing (Allerton)}, pages 810--815. IEEE, 2019.

\bibitem{mehta2015algebraic}
D.~Mehta, N.~S. Daleo, F.~D{\"o}rfler, and J.n~D. Hauenstein.
\newblock Algebraic geometrization of the kuramoto model: Equilibria and
  stability analysis.
\newblock {\em Chaos: An Interdisciplinary Journal of Nonlinear Science},
  25(5):053103, 2015.

\bibitem{coss2018locating}
O.~Coss, J.~D. Hauenstein, H.~Hong, and D.~K. Molzahn.
\newblock Locating and counting equilibria of the kuramoto model with rank-one
  coupling.
\newblock {\em SIAM Journal on Applied Algebra and Geometry}, 2(1):45--71,
  2018.

\bibitem{pikovsky2015dynamics}
A.~Pikovsky and M.~Rosenblum.
\newblock Dynamics of globally coupled oscillators: Progress and perspectives.
\newblock {\em Chaos: An Interdisciplinary Journal of Nonlinear Science},
  25(9):097616, 2015.

\bibitem{pikovsky2008partially}
A.~Pikovsky and M.~Rosenblum.
\newblock Partially integrable dynamics of hierarchical populations of coupled
  oscillators.
\newblock {\em Physical Review Letters}, 101(26):264103, 2008.

\bibitem{basnarkov2007phase}
L.~Basnarkov and V.~Urumov.
\newblock Phase transitions in the kuramoto model.
\newblock {\em Physical Review E}, 76(5):057201, 2007.

\bibitem{pazo2005thermodynamic}
D.~Paz{\'o}.
\newblock Thermodynamic limit of the first-order phase transition in the
  kuramoto model.
\newblock {\em Physical Review E}, 72(4):046211, 2005.

\bibitem{medvedev2014small}
G.~S. Medvedev.
\newblock Small-world networks of kuramoto oscillators.
\newblock {\em Physica D: Nonlinear Phenomena}, 266:13--22, 2014.

\bibitem{hu2014exact}
X.~Hu, S.~Boccaletti, W.~Huang, X.~Zhang, Z.~Liu, S.~Guan, and C-H Lai.
\newblock Exact solution for first-order synchronization transition in a
  generalized kuramoto model.
\newblock {\em Scientific Reports}, 4(1):1--6, 2014.

\bibitem{li2022mean}
W.~Li and H.~Park.
\newblock Mean field kuramoto models on graphs.
\newblock {\em arXiv preprint arXiv:2203.00142}, 2022.

\bibitem{lu2020synchronization}
J.~Lu and S.~Steinerberger.
\newblock Synchronization of kuramoto oscillators in dense networks.
\newblock {\em Nonlinearity}, 33(11):5905, 2020.

\bibitem{taylor2012there}
R.~Taylor.
\newblock There is no non-zero stable fixed point for dense networks in the
  homogeneous kuramoto model.
\newblock {\em Journal of Physics A: Mathematical and Theoretical},
  45(5):055102, 2012.

\bibitem{townsend2020dense}
A.~Townsend, M.~Stillman, and S.~H. Strogatz.
\newblock Dense networks that do not synchronize and sparse ones that do.
\newblock {\em Chaos: An Interdisciplinary Journal of Nonlinear Science},
  30(8):083142, 2020.

\bibitem{yoneda2021lower}
R.~Yoneda, T.~Tatsukawa, and J.~Teramae.
\newblock The lower bound of the network connectivity guaranteeing in-phase
  synchronization.
\newblock {\em Chaos: An Interdisciplinary Journal of Nonlinear Science},
  31(6):063124, 2021.

\bibitem{muller2021algebraic}
L.~Muller, J.~Min\'a\ifmmode~\check{c}\else \v{c}\fi{}, and T.~T. Nguyen.
\newblock Algebraic approach to the kuramoto model.
\newblock {\em Physical Reiew. E}, 104:L022201, Aug 2021.

\bibitem{budzinski2022geometry}
R.~C. Budzinski, T.~T. Nguyen, J.~{\DJ}o{\`a}n, J.~Min{\'a}{\v{c}}, T.~J.
  Sejnowski, and L.~E. Muller.
\newblock Geometry unites synchrony, chimeras, and waves in nonlinear
  oscillator networks.
\newblock {\em Chaos: An Interdisciplinary Journal of Nonlinear Science},
  32(3):031104, 2022.

\bibitem{Perko2001}
L.~Perko.
\newblock {\em Differential Equations and Dynamical Systems}.
\newblock Springer-Verlag, New York, 2001.

\bibitem{kanemitsu2013matrices}
S.~Kanemitsu and M.~Waldschmidt.
\newblock Matrices of finite abelian groups, finite fourier transform and
  codes.
\newblock {\em Proc. 6th China-Japan Sem. Number Theory, World Sci.
  London-Singapore-New Jersey}, pages 90--106, 2013.

\bibitem{djoan2022joins}
J.~{\DJ}o{\`a}n, J.~Min{\'a}{\v{c}}, L.~Muller, T.~T. Nguyen, and F.~W. Pasini.
\newblock Joins of circulant matrices.
\newblock {\em Linear Algebra and its Applications}, 2022.

\bibitem{boccaletti2014structure}
S.~Boccaletti, G.~Bianconi, R.~Criado, C.~I. Del~Genio, J.~G{\'o}mez-Gardenes,
  M.~Romance, I.~Sendina-Nadal, Z.~Wang, and M.~Zanin.
\newblock The structure and dynamics of multilayer networks.
\newblock {\em Physics Reports}, 544(1):1--122, 2014.

\bibitem{bassett2017network}
D.~S. Bassett and O.~Sporns.
\newblock Network neuroscience.
\newblock {\em Nature neuroscience}, 20(3):353--364, 2017.

\bibitem{de2016physics}
M.~De~Domenico, C.~Granell, M.~A. Porter, and A.~Arenas.
\newblock The physics of spreading processes in multilayer networks.
\newblock {\em Nature Physics}, 12(10):901--906, 2016.

\bibitem{kivela2014multilayer}
M.~Kivel{\"a}, A.~Arenas, M.~Barthelemy, J.~P. Gleeson, Y.~Moreno, and M.~A.
  Porter.
\newblock Multilayer networks.
\newblock {\em Journal of Complex Networks}, 2(3):203--271, 2014.

\bibitem{sidje1998expokit}
R.~B. Sidje.
\newblock Expokit: A software package for computing matrix exponentials.
\newblock {\em ACM Transactions on Mathematical Software (TOMS)},
  24(1):130--156, 1998.

\end{thebibliography}
\end{document}